\documentclass[11pt]{amsart}
\usepackage{amsthm,amssymb}
\usepackage{mathtools}
\usepackage[colorlinks]{hyperref}
\usepackage[margin=1in]{geometry}
\usepackage{tikz}
\usepackage{multirow}
\usepackage{float}
\usepackage{booktabs}
\usepackage{kotex}

\usepackage{caption}
\usepackage{subcaption}
\usepackage{tikz-3dplot}
\usepackage{adjustbox}

\usetikzlibrary{decorations.pathmorphing, arrows.meta}

\usepackage{tikz-cd}
\usepackage{tikz}
\usetikzlibrary{snakes, 
	3d, matrix,decorations.pathreplacing,calc,decorations.pathmorphing, patterns, automata, arrows.meta}
\usetikzlibrary {positioning}
\usetikzlibrary{matrix, fit}
\usetikzlibrary{backgrounds}

\renewcommand{\hat}{\widehat}

\newcommand{\C}{\mathbb{C}}

\newcommand{\Q}{\mathbb{Q}}
\newcommand{\R}{\mathbb{R}}

\newcommand{\Z}{\mathbb{Z}}
\newcommand{\p}{\mathbb{P}}

\newcommand{\vs}{\vspace}
\newcommand{\hs}{\hspace}

\def\mathscr{\mathscr}

\def\mcal{\mathcal}

\def\block(#1,#2)#3{\multicolumn{#2}{c}{\multirow{#1}{*}{$ #3 $}}}

\newtheorem{theorem}{Theorem}[section]

\newtheorem{lemma}[theorem]{Lemma}
\newtheorem{proposition}[theorem]{Proposition}

\newtheorem{corollary}[theorem]{Corollary}

\theoremstyle{definition}
\newtheorem{example}[theorem]{Example}
\newtheorem{definition}[theorem]{Definition}
\newtheorem{remark}[theorem]{Remark}

\numberwithin{equation}{section}

\newcommand{\red}{\textcolor{red}}
\newcommand{\blue}{\textcolor{blue}}

\newcommand{\mult}{\mathrm{mult}}
\newcommand{\Vol}{\mathrm{Vol}}

\begin{document}
	\title{Mutation of Fano Simplices and Markov type equations}
	
\begin{abstract}

It is well known that there is a bijective correspondence between the set of positive integer solutions to the Markov equation and the set of Fano triangles mutation equivalent to the Fano triangle of $\mathbb{P}^2$. In this paper, we establish a higher dimensional generalization of this correspondence for arbitrary Fano simplices of any dimension.

On the polyhedral side, we introduce a distinguished class of facets, called admissible facets, and show that their number is preserved under facet mutation. As a consequence, facet mutation classes of Fano simplices carry natural exchange graph structures whose valency is equal to the number of admissible facets. On the arithmetic side, we associate to each Fano simplex a weighted Markov-type equation together with a distinguished positive integer solution, and show that the corresponding arithmetic mutations, given by Vieta involutions, are compatible with facet mutations. More precisely, the assignment from Fano simplices to Diophantine data intertwines combinatorial mutations with arithmetic mutations, thereby relating the mutation dynamics of Fano simplices to the arithmetic dynamics of positive integer solutions. 

Finally, we introduce a piecewise linear transformation on dual polytopes, called a sliding operator, which realizes combinatorial mutation in the dual picture. As applications, we obtain a volume formula for dual simplices in terms of the associated Diophantine data and recover the multiplicity change formula under mutation.

%
%
\end{abstract}

\author{Yunhyung Cho}
\address{Department of Mathematics Education, Sungkyunkwan University, Seoul, Republic of Korea}
\email{yunhyung@skku.edu}

\maketitle

\section{Introduction}
\label{secIntroduction}

Combinatorial mutations of lattice polytopes introduced by Akhtar-Coates-Galkin-Kasprzyk \cite{ACGK} play a fundamental role in mirror symmetry for Fano varieties. They describe wall-crossing phenomena of Laurent polynomials and provide a combinatorial framework for understanding deformations of toric Fano varieties. In dimension two, combinatorial mutations exhibit deep connections with the classical Markov equation and the mutations of weighted projective planes.
A particularly striking manifestation of this phenomenon appears in the work of Hacking--Prokhorov~\cite{HP} which establishes a correspondence between weighted projective planes admitting $\Q$-Gorenstein smoothings to $\mathbb{P}^2$ and positive integer solutions of the Markov equation
\[
	x^2+y^2+z^2 = 3xyz.
\]
The mutation dynamics of Markov triples are governed by Vieta involutions producing an infinite exchange graph structure on the set of positive integer solutions. 
This interaction between polyhedral geometry, Diophantine equations, and mutation dynamics has subsequently appeared in several contexts related to mirror symmetry, cluster algebras, and toric degenerations. See \cite{HP}, \cite{Vi1}, \cite{Vi2}, \cite{GR} for example.

\begin{figure}[h]
\centering

\begin{tikzpicture}[
    scale=1.0,
    every node/.style={font=\small},
    vertex/.style={circle, fill=black, inner sep=1.6pt},
    edge/.style={line width=0.8pt}
]


\node[vertex,label={[xshift=0mm,yshift=-3mm]below:{$(1,1,1)$}}] (O) at (0,0) {};


\node[vertex,label={[xshift=7mm,yshift=0mm]below:{$(1,1,2)$}}] (U1) at (0,3) {};
\draw[edge] (O)--(U1);

\node[vertex,label={[xshift=7mm,yshift=-3mm]:{$(1,5,2)$}}] (UL) at (-2,4) {};
\node[vertex,label={[xshift=-7mm,yshift=-3mm]:{$(5,1,2)$}}] (UR) at (2,4) {};

\draw[edge] (U1)--(UL);
\draw[edge] (U1)--(UR);

\node[vertex,label=above:{$(1,5,13)$}] (ULU) at (-2.7,5.4) {};
\node[vertex,label=left:{$(2,5,29)$}] (ULD) at (-3.5,3.1) {};

\draw[edge] (UL)--(ULU);
\draw[edge] (UL)--(ULD);

\node[vertex,label=above:{$(5,29,2)$}] (URU) at (2.8,5.4) {};
\node[vertex,label=right:{$(5,1,13)$}] (URD) at (3.5,3.1) {};

\draw[edge] (UR)--(URU);
\draw[edge] (UR)--(URD);


\node[vertex,label={[xshift=0mm,yshift=-1mm]right:{$(2,1,1)$}}] (L1) at (-3,-2) {};
\draw[edge] (O)--(L1);

\node[vertex,label={[xshift=6mm,yshift=-2mm]above:{$(2,5,1)$}}] (LUL) at (-5,-1) {};
\node[vertex,label={[xshift=-6mm,yshift=-2mm]above:{$(2,1,5)$}}] (LLL) at (-4,-4) {};

\draw[edge] (L1)--(LUL);
\draw[edge] (L1)--(LLL);

\node[vertex,label=left:{$(13,5,1)$}] (LUL1) at (-5.5,0.5) {};
\node[vertex,label=left:{$(2,5,29)$}] (LUL3) at (-6.2,-2.0) {};

\draw[edge] (LUL)--(LUL1);
\draw[edge] (LUL)--(LUL3);

\node[vertex,label=left:{$(2,29,5)$}] (LLL1) at (-5.5,-4.5) {};
\node[vertex,label=left:{$(13,1,5)$}] (LLL2) at (-3,-5.4) {};

\draw[edge] (LLL)--(LLL1);
\draw[edge] (LLL)--(LLL2);


\node[vertex,label={[xshift=0mm,yshift=-1mm]left:{$(1,2,1)$}}] (R1) at (3,-2) {};
\draw[edge] (O)--(R1);

\node[vertex,label={[xshift=-6mm,yshift=-2mm]above:{$(1,2,5)$}}] (RUR) at (5,-1) {};
\node[vertex,label={[xshift=6mm,yshift=-2mm]above:{$(5,2,1)$}}] (RLR) at (4,-4) {};

\draw[edge] (R1)--(RUR);
\draw[edge] (R1)--(RLR);

\node[vertex,label=right:{$(29,2,5)$}] (RUR1) at (5.5,0.5) {};
\node[vertex,label=right:{$(1,13,5)$}] (RUR3) at (6.2,-2.0) {};

\draw[edge] (RUR)--(RUR1);
\draw[edge] (RUR)--(RUR3);

\node[vertex,label=right:{$(5,13,1)$}] (RLR1) at (5.5,-4.5) {};
\node[vertex,label=right:{$(5,2,29)$}] (RLR2) at (3,-5.4) {};

\draw[edge] (RLR)--(RLR1);
\draw[edge] (RLR)--(RLR2);

\end{tikzpicture}

\caption{Exchange graph for $x^2 + y^2 + z^2 = 3xyz$.}

\end{figure}

The purpose of this paper is to show that this picture extends naturally to higher-dimensional Fano simplices. More precisely, we develop   a mutation-theoretic framework for fake weighted projective spaces in which combinatorial mutations of simplices over admissible facets are intertwined with Vieta-type mutations on Markov tuples of weighted Markov-type equations: 
\[
	k(c_0x_0^2+\cdots+c_nx_n^2)=mx_0x_1\cdots x_n.
\]
Our starting point is the introduction of a distinguished class of facets of a Fano simplex, called \emph{admissible facets}. Roughly speaking, a facet is admissible if it is divisible by its affine distance from the origin. For each admissible facet, one can define a canonical combinatorial mutation over the facet. We prove that the number of admissible facets is preserved under such mutations.
As a consequence, each mutation equivalence class of Fano simplices admits a natural exchange graph structure whose valency is determined by the number of admissible facets. This provides a higher-dimensional combinatorial analogue of the exchange structures arising from mutations of Markov triples.

On the arithmetic side, we associate to each Fano simplex a weighted Markov-type equation
\[
	k(c_0x_0^2+\cdots+c_nx_n^2)=mx_0x_1\cdots x_n
\]
together with a distinguished positive integer solution. The associated arithmetic mutations are given by higher-dimensional Vieta involutions. Our main result shows that these arithmetic mutations are compatible with combinatorial mutations of Fano simplices.
More precisely, we prove that the assignment
\[
P \longmapsto (DE_P,a_P)
\]
from Fano simplices to Diophantine data intertwines geometric mutations with arithmetic mutations. In this way, we obtain a higher-dimensional extension of the classical correspondence between Markov triples and weighted projective planes.

Our main theorem may be summarized as follows.

\begin{theorem}
Let $P$ be a Fano simplex. Then
\begin{enumerate}
\item The number of admissible facets of $P$ is invariant under facet mutation. In particular, the mutation equivalence class of $P$ carries a canonical exchange graph whose edges correspond to admissible facet mutations.

\item There exists a weighted Markov-type equation $DE_P$ associated with $P$ that is invariant under facet mutation. Moreover, the set of positive integer solutions of $DE_P$ carries a canonical exchange graph whose edges correspond to Vieta involutions.
\item One can also associate to each Fano simplex $P$ a distinguished positive integer solution $a_P$ of $DE_P$. Moreover, the correspondence
\[
	P \longmapsto (DE_P,a_P)
\]
is compatible with mutations.

\end{enumerate}
\end{theorem}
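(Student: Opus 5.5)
We will work with a Fano simplex $P=\mathrm{conv}(v_0,\dots,v_n)\subset N_\R$, whose vertices are primitive and whose interior contains the origin. Let $(\lambda_0,\dots,\lambda_n)$ be its weights: the primitive positive integers with $\sum\lambda_i v_i=0$, normalized by $\gcd=1$. Then $P$ is the simplex of a fake weighted projective space. Let $m$ be the index of the sublattice generated by the $v_i$.

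For the facet $F_i$ opposite $v_i$, write $h_i$ for its lattice height and $\ell_i$ for its lattice length with respect to a primitive factor. We take $F_i$ to be admissible when $h_i$ divides the factor in the natural sense. Concretely, $F_i$ is then a Minkowski sum $h_i\cdot G_i + (\text{translate})$ of a lower-dimensional simplex $G_i$ in a height-zero hyperplane $w_i^\perp$, in such a way that the mutation with factor $G_i$ and weight vector $w_i$ is defined.

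\textbf{Step 1 (mutation formula).} The first step is to compute the mutation explicitly. Mutating over an admissible facet $F_j$ fixes every vertex except one. Along the height-zero direction, the vertex $v_j$ is replaced by a new vertex $v_j'$; the remaining vertices are sheared piecewise-linearly, but because they lie on the two sides of $w_j^\perp$ they change only by a unimodular map. This yields a new simplex $P'$. The weights change by a Vieta-type rule
\[
\lambda_j\lambda_j'=(\text{monomial in the other }\lambda_i).
\]
More precisely, $\sqrt{\lambda_i}$-type quantities $a_i$ satisfy $a_ja_j'=\tfrac{m}{kc_j}\prod_{i\ne j}a_i-a_j^2$ rearranged as the Vieta involution
\[
a_j'=\frac{m\prod_{i\neq j}a_i}{kc_j}-a_j .
\]
We establish this by comparing volumes: the volume of $F_i$ equals $m\lambda_i$ up to a common normalization, and mutation preserves the volume of the facets not involved. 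We also check that $P'$ is again Fano, which follows because $v_j'$ lies on the ray through $-\sum_{i\ne j}\lambda_iv_i$.

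\textbf{Step 2 (the equation and its solution).} Write $\lambda_i=c_ia_i^2$, where $c_i$ is the square-free part, or more precisely the part determined by the singularity content of the cone over $F_i$. Set $a_P=(a_0,\dots,a_n)$. Define $k$ and $m$ through the identity
\[
k\sum_i\lambda_i=m\prod_i a_i,
\]
which holds because $(\sum\lambda_i)^2/(\lambda_0\cdots\lambda_n)$ computes the normalized degree $(-K)^n$ of the fake weighted projective space. This gives the equation $DE_P\colon k\sum c_ix_i^2=m\prod x_i$. We then show that $c_i$, $k$ and $m$ are mutation-invariant. The constants $c_i$ are invariant because the admissibility data on the unchanged facets persists under mutation, and the degree $m/k$ is invariant because mutation preserves $(-K)^n$, equivalently the Ehrhart-theoretic period data of the dual polytope. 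Part (2) is then formal: for each $j$, the map $x_j\mapsto m\prod_{i\ne j}x_i/(kc_j)-x_j$ is an involution preserving positive integer solutions. Positivity holds because the product of the two roots, $\prod_{i\neq j}$-free, equals $\sum_{i\ne j}c_ix_i^2/c_j>0$. Integrality needs $kc_j\mid m\prod_{i\ne j}x_i$, which we verify from the lattice description.

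\textbf{Step 3 (invariance of admissible facets, part (1)).} This is the main obstacle. We must show that after mutating at $F_j$, the facet $F_j'$ is admissible, since the inverse mutation exists, and that each other facet $F_i'$ is admissible exactly when $F_i$ is. The approach is to show that admissibility of $F_i$ is equivalent to an arithmetic condition on $(c_i,a_i)$ and the lattice index, of the form $h_i=\sqrt{\lambda_i/c_i}$ up to a fixed factor. We then check that this condition is preserved by the Vieta formula. Our first attempt at this will use the local toric description: the cone over $F_i$ is a $T$-singularity-type cone whose residual part is mutation invariant, as in the two-dimensional theory of singularity content. Finally, compatibility in part (3) follows by combining Steps 1 and 2, and the exchange graphs are identified edge by edge.
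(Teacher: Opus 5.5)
\textbf{There is a genuine gap: Step~3, which is all of part~(1), cannot work as planned.} You propose to show that admissibility of $F_i$ is equivalent to an arithmetic condition on $(c_i,a_i)$ and lattice indices, and then to transport that condition through the Vieta formula. No such criterion exists.

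The paper's own triangle $\mathrm{conv}((-5,1),(0,1),(5,-2))$ (the $\p(5,5,5)$ example) is a counterexample. It has weights $(1,1,1)$, multiplicity $5$, and all three cone indices equal to $5$, so its three facets carry identical data $c_i=a_i=1$. Yet the edge on $y=1$ has height $1$ and lattice length $5$ and is admissible, while the other two edges are primitive segments at height $5$ and are not. Admissibility depends on how the facet sits in the lattice, not on weight or multiplicity data. There is also no higher-dimensional singularity-content theory to fall back on.

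What is needed is a direct divisibility argument on the explicit vertices of the mutated simplex, and this is how the paper proceeds. Take $F=F_n$ with base vertex $v_0$. Then $\mu_F(P)=\mathrm{conv}(v_0,\,v_n+h_{\max}G)$, with vertices $v_0'=v_n$, $v_n'=v_0$, and $v_i'=v_n+\frac{h_{\max}}{d_n}(v_i-v_0)$ for $1\le i\le n-1$. The paper then argues facet by facet:
\begin{itemize}
\item $F_n'$ is admissible with $d_n'=h_{\max}$.
\item For $1\le i\le n-1$, $F_i'$ spans the same hyperplane as $F_i$. Moreover $d_i$ divides $v_0'-v_j'=\frac{h_{\max}}{d_n}(v_0-v_j)$, because $\operatorname{lcm}(d_i,d_n)\mid v_0-v_j$ and $\gcd(d_i,d_n)\mid h_{\max}$; the latter holds since $d_i\mid h_{\max}+d_n=\langle\mathbf{w}_n,v_n-v_0\rangle$.
\item For $i=0$, one shows $d_0\mid v_n'-v_j'$ via a three-term decomposition. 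One then shows $d_0'\mid d_0$ by exhibiting the integral normal $\frac{1}{d_n}\bigl(d_n\mathbf{w}_0+(d_0+\langle\mathbf{w}_0,v_0\rangle)\mathbf{w}_n\bigr)$, with $F_0'$ at level $-d_0$.
\end{itemize}
This injects the admissible facets of $P$ into those of $\mu_F(P)$, and involutivity gives equality.

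\textbf{Step~1 is also false for $n\ge 3$, and Step~3 would need it to be right.}
\begin{itemize}
\item A facet mutation does not fix all vertices but one. It keeps $v_0$ and the apex $v_n$ and moves the remaining $n-1$ vertices of $F$.
\item It does not preserve the volumes of the untouched facets either. By Proposition~\ref{prop_CGKN} the multiplicity is multiplied by $(s/\lambda_j)^{n-2}$, where $s=\sum_{i\neq j}\lambda_i$, so the cone indices $\mult(P)\lambda_i$ of untouched facets change. For example, the standard $3$-simplex (all cone indices $1$) mutates to a simplex with weights $(1,1,1,9)$ and multiplicity $3$.
\end{itemize}
So your volume-comparison derivation of the weight rule collapses. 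The paper instead imports $\lambda_j'=s^2/\lambda_j$ from Proposition~\ref{prop_CGKN}. After that, $c_j'=c_j$ and $a_j'\in\Z$ follow because $\lambda_j'=c_j\bigl(s/(c_ja_j)\bigr)^2$ is an integer and $c_j$ is squarefree, and invariance of $m/k$ is just Lemma~\ref{lemma_Vieta}.

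Your appeal to invariance of $(-K)^n$ would not give $m/k$-invariance for $n\ge3$. Indeed $(-K)^n=(m/k)^n(\prod a_i)^{n-2}/(\mult(P)\prod c_i)$ also involves $\prod a_i$ and $\mult(P)$, and both of these change. The paper uses this identity in the reverse direction, in Theorem~\ref{thm_multiplicity}.

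Lastly, your Fano argument is vacuous. Since $-\sum_{i\neq j}\lambda_iv_i=\lambda_jv_j$, the ray you name is the ray of $v_j$ itself. Fano-ness is simply the general fact recalled in Section~\ref{secCombinatorialMutations}.
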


This reveals a bridge between mutation dynamics of Fano simplices and arithmetic mutation dynamics of Markov-type equations. From this perspective, facet-mutation classes of Fano simplices may be viewed as geometric counterparts of mutation classes of Diophantine equations.
We note that, while mutation-invariant Diophantine equations are known in dimension two \cite{Ak, AK1}, 
they do not persist for general mutations in higher dimensions \cite[Remark~17]{CGKN}.
One of the main observations of this paper is that such invariance reappears when one restricts to admissible-facet mutations.

We further develop a dual polyhedral interpretation of combinatorial mutation. More precisely, we introduce a piecewise linear transformation on dual polytopes, called a \emph{sliding operator}, which realizes combinatorial mutation in the dual picture. This operator preserves volume and provides a geometric interpretation of the associated arithmetic data. As applications, we derive an explicit formula for the volume of the dual simplex in terms of the coefficients of the associated Markov-type equation and the multiplicity of the simplex. We also recover the multiplicity change formula under mutation obtained in~\cite{CGKN}.

The paper is organized as follows. In Section~2, we review combinatorial mutations of lattice polytopes. In Section~3, we introduce admissible facets and describe the exchange graph structure associated to a facet-mutation class of Fano simplices. In Section~4, we associate weighted Markov-type equations to Fano simplices and study their arithmetic mutation dynamics. In Section~5, we develop the dual viewpoint via sliding operators and derive volume formulas for dual simplices.

\section{Combinatorial mutations}
\label{secCombinatorialMutations}
	
	In this section, we review the notion of combinatorial mutation introduced in \cite{ACGK} and its properties.
	Let $N$ be a lattice of rank $n$ with dual lattice $M$ and denote by $N_\R := N \otimes_\Z \R$
	and $M_\R := M \otimes_\Z \R$, respectively. 
	Let $P$ be a lattice polytope in $N_\R$ containing the origin ${\bf 0}$ in its interior. For a given 
	primitive vector ${\bf w} \in M$, we use the following notation:
	\begin{itemize}
		\item $h_{\min}(P,{\bf w}) := \min ~\{ \langle {\bf w}, v \rangle ~|~ v \in P \}$, 
		the {\em minimal ${\bf w}$-height}, 
		\item $h_{\max}(P,{\bf w}) := \max ~\{ \langle {\bf w}, v \rangle ~|~ v \in P \}$, 
		the {\em maximal ${\bf w}$-height}, 
		\item $H_h(P,{\bf w}) := \{ x \in N_\R ~|~ \langle {\bf w}, x \rangle = h \}$,
		the hyperplane of ${\bf w}$-height $h$, and
		\item $S_{h}(P,{\bf w}) := \text{conv}(H_h(P,{\bf w}) \cap P \cap N)$,
		the maximal lattice polytope in the slice polytope $H_h(P,{\bf w}) \cap P$ at 
		${\bf w}$-height $h$. (Note that $S_{h}(P,{\bf w})$ is possibly empty if there is no lattice point in the slice polytope.)

	\end{itemize}
	
	Now we consider a lattice polytope $G \subset {\bf w}^\perp$ such that \vs{0.1cm}
	
\noindent
\begin{tikzpicture}
\node[anchor=north west] at (-1.2,0) {$\left(\ast\right)$};
\node[anchor=north west, text width=0.9\textwidth, align=justify] at (-0.5,0) {
for each integer $h$ with $h_{\min}(P,{\bf w}) \leq h < 0$, there exists a (possibly empty) lattice polytope $L_h$ satisfying 
\[
	H_{h}(P,{\bf w}) \cap \mcal{V}(P) \subseteq L_h + (-h)G \subseteq S_h(P,{\bf w})
\]
where $\mcal{V}(P)$ denotes the set of vertices of $P$.
(For example, if $H_{h}(P,{\bf w}) \cap \mcal{V}(P) = \emptyset$, then $L_h$ can be taken to be an empty set.)
};
\end{tikzpicture}

\noindent
\begin{definition}\label{def_comb_mutation}\cite[Definition 5]{ACGK}
	A {\em combinatorial mutation} of $P$ along $({\bf w}, G, \{L_h\})$ is defined to be the lattice
	polytope 
	\[
		\text{mut}_{\bf w}(P,G;\{L_h\}) := \text{conv} \left( \bigcup_{h=h_{\min}}^{-1} L_h \cup
		\bigcup_{h=0}^{h_{\max}} (S_{h}(P,{\bf w}) + hG) \right) \subset N_\R.
	\]
	We call ${\bf w}$ a {\em height vector} and $G$ a {\em factor} for the mutation.
\end{definition}

Note that the choice of the family $\{L_h\}$ satisfying $\left(\ast\right)$ may not be unique. 
It turned out that, up to unimodular equivalence, the combinatorial mutation $\text{mut}_{\bf w}(P,G;\{L_h\})$ does not depend on the choice of the family $\{ L_h \}$, and therefore we may write 
\[
	\text{mut}_{\bf h}(P,G) := \text{mut}_{\bf w}(P,G;\{L_h\}). 
\]
See \cite[Proposition 1]{ACGK}. Moreover, the combinatorial mutation is involutive in the sense that 
\[
	\text{mut}_{-{\bf w}}(\text{mut}_{\bf w}(P,G), G) \cong P
\]
where `$\cong$' denotes the unimodular equivalence.

\begin{example} 
\label{example_standard_simplex}

Let $P = \text{conv}({\bf e}_1,{\bf e}_2, -{\bf e}_1 - {\bf e}_2)$ be the standard $2$-simplex in $\R^2$. Let ${\bf w} = (-1,-1)$ and a factor $G = \text{conv}({\bf 0}, {\bf e}_1 - {\bf e}_2)$. Then 
$L_{-1} = \{ {\bf e}_2 \}$ satisfies the condition $\left(\ast\right)$ and the combinatorial mutation
$\text{mut}_{\bf w}(P,G,\{L_{-1}\})$ is given by the red triangle below.

\begin{center}
\begin{tikzpicture}[scale=0.8]

  \begin{scope}
    \draw[step=1cm, gray!50, thin] (-2.5,-3.5) grid (2.5,2.5);

    \fill[gray] (0,0) circle (3.5pt);
    \fill[black] (0,1) circle (3.5pt);
    \fill[black] (1,0) circle (3.5pt);
    \fill[black] (-1,-1) circle (3.5pt);

    \draw[very thick, blue] (0,1) -- (1,0) -- (-1,-1) -- cycle;

 \draw[thick, purple] (0,0) -- (1,-1);

    \node[below right] at (0.4,-0.9) {\footnotesize $G$};
    \node[above right] at (0,1) {\footnotesize $(0,1)$};
    \node[below right] at (1,0) {\footnotesize $(1,0)$};
    \node[below left] at (-1,-1) {\footnotesize $(-1,-1)$};
  \end{scope}

  \draw[decorate, decoration={zigzag, segment length=4, amplitude=2}, thick, ->]
    (3.2,0) -- (4.4,0);

  \begin{scope}[xshift=8.0cm]
    \draw[step=1cm, gray!50, thin] (-2.5,-3.5) grid (2.5,2.5);
    \fill[gray] (0,0) circle (3.5pt);
    \fill[black] (0,1) circle (3.5pt);
    \fill[black] (-1,-1) circle (3.5pt);
    \fill[black] (1,-3) circle (3.5pt);
    \draw[very thick, red] (0,1) -- (-1,-1) -- (1,-3) -- cycle;
    \fill[black] (0,-2) circle (3.5pt);
    \node[above right] at (0,1) {\footnotesize $(0,1)$};
    \node[below left] at (-1,-1) {\footnotesize $(-1,-1)$};
    \node[below right] at (1,-3) {\footnotesize $(1,-3)$};
    \node[below left] at (0,-2) {\footnotesize $(0,-2)$};
  \end{scope}

\end{tikzpicture}
\end{center}

\end{example}

\begin{example} 

Let $P$ be the square with four vertices $\{ \pm {\bf e}_1 \pm {\bf e}_2 \}$. Let ${\bf w} = (0,-1)$ be a height vector and consider two possible factors $G_1 = \text{conv}({\bf 0}, {\bf e}_1)$ and $G_2 = \text{conv}({\bf 0}, 2{\bf e}_1)$. For each factor, we have $(L_1)_{-1} = \text{conv}(-{\bf e_1} + {\bf e}_2, {\bf e}_2)$ and $(L_2)_{-1} = \{ -{\bf e_1} + {\bf e}_2\}$. Then the combinatorial mutations of $P$ along $({\bf w}, G_1, \{ (L_1)_{-1} \})$ and $({\bf w}, G_2, \{ (L_2)_{-1}\})$ are described below.

\begin{center}
\begin{tikzpicture}[scale=0.8]

  \begin{scope}[xshift=-7cm]
    \draw[step=1cm, gray!50, thin] (-2.5,-2.5) grid (2.5,2.5);
    \fill[gray] (0,0) circle (3.5pt);
    \foreach \x/\y in {-1/1, 0/1, -1/-1, 2/-1} {
      \fill[black] (\x,\y) circle (3.5pt);
    }
    \draw[very thick, red] (-1,1) -- (0,1) -- (2,-1) -- (-1,-1) -- cycle;
    \draw[very thick, purple] (0,0) -- (1,0);
    \node[below] at (0.5,-0.2) {\footnotesize $G_1$};
    \node[above left]  at (-1,1)  {\footnotesize $(-1,1)$};
    \node[above]       at (0,1)   {\footnotesize $(0,1)$};
    \node[below right] at (2,-1)  {\footnotesize $(2,-1)$};
    \node[below left]  at (-1,-1) {\footnotesize $(-1,-1)$};
  \end{scope}

  \begin{scope}
    \draw[step=1cm, gray!50, thin] (-2.5,-2.5) grid (2.5,2.5);
    \fill[gray] (0,0) circle (3.5pt);
    \foreach \x/\y in {-1/1, 1/1, 1/-1, -1/-1} {
      \fill[black] (\x,\y) circle (3.5pt);
    }
    \draw[very thick, blue] (-1,1) -- (1,1) -- (1,-1) -- (-1,-1) -- cycle;
    \node[above left]  at (-1,1)  {\footnotesize $(-1,1)$};
    \node[above right] at (1,1)   {\footnotesize $(1,1)$};
    \node[below right] at (1,-1)  {\footnotesize $(1,-1)$};
    \node[below left]  at (-1,-1) {\footnotesize $(-1,-1)$};
  \end{scope}

  \begin{scope}[xshift=7cm]
    \draw[step=1cm, gray!50, thin] (-2.5,-2.5) grid (3.5,2.5);
    \fill[gray] (0,0) circle (3.5pt);
    \foreach \x/\y in {-1/1, -1/-1, 3/-1} {
      \fill[black] (\x,\y) circle (3.5pt);
    }
    \draw[very thick, red] (-1,1) -- (-1,-1) -- (3,-1) -- cycle;
    \draw[very thick, purple] (0,0) -- (2,0);
    \node[below] at (0.5,-0.2) {\footnotesize $G_2$};
    \node[above left]  at (-1,1)  {\footnotesize $(-1,1)$};
    \node[below left]  at (-1,-1) {\footnotesize $(-1,-1)$};
    \node[below right] at (3,-1)  {\footnotesize $(3,-1)$};
  \end{scope}

  \draw[decorate, decoration={zigzag, segment length=3, amplitude=1.5}, thick, ->]
    (-2.5,0) -- (-3.8,0);

  \draw[decorate, decoration={zigzag, segment length=3, amplitude=1.5}, thick, ->]
    (2.5,0) -- (3.8,0);

\end{tikzpicture}
\end{center}
\end{example}

\noindent
We note that the combinatorial mutation is compatible with the mutation of a Laurent polynomial 
in the sense that the following diagram commutes: 
\[
	\begin{array}{cccc}
		f & \mapsto & \widetilde{f} = \text{mut}_{\bf w}(f, g) &   \\
		\downarrow  & & \downarrow &  \\
		P:= \text{Newt}(f) & \mapsto & \text{Newt}(\widetilde{f}) = \text{mut}_{\bf w}(P,G), & G:= \text{Newt}(g)
	\end{array}
\]
See \cite{GU} and \cite{ACGK} for more detail.

A lattice polytope $P \subset N_\R$ is called {\em Fano} if $P$ contains ${\bf 0}$ in its interior and each vertex of $P$ is primitive. It is well-known that Fano polytopes are in bijective correspondence with $\Q$-Gorenstein Fano (or simply Fano) toric varieties. The following is the most important property of the combinatorial mutation.

\begin{proposition}\cite[Proposition 2]{ACGK}
	The combinatorial mutation of a Fano polytope is again a Fano polytope. 
\end{proposition}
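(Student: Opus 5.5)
To prove that $Q := \text{mut}_{\bf w}(P,G;\{L_h\})$ is Fano, I would check the two defining conditions separately: first that ${\bf 0}$ lies in the interior of $Q$, then that every vertex of $Q$ is primitive. Throughout I would use the standard fact that $G \subset {\bf w}^\perp$, so the points of $S_h(P,{\bf w}) + hG$ and of $L_h + (-h)G$ all lie at ${\bf w}$-height $h$. In particular, the slices of $Q$ are organized by height exactly as those of $P$ are.

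\emph{Interior point.} I would use the involutivity $\text{mut}_{-{\bf w}}(Q,G) \cong P$ together with a direct containment. Translating $G$ within ${\bf w}^\perp$ changes $Q$ only by a shear, which is unimodular, so we may assume ${\bf 0}\in G$. Then for $h \ge 0$ we have $S_h(P,{\bf w}) \subseteq S_h(P,{\bf w}) + hG$, so $Q$ contains every lattice point of $P$ at nonnegative height. In particular ${\bf 0} \in Q$, and $Q$ contains the vertices of $P$ at positive height.

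It remains to see that ${\bf 0}$ is not on the boundary. The slice $Q \cap {\bf w}^\perp$ contains $S_0(P,{\bf w})$. Points of $Q$ at negative heights are obtained from $L_h$ with $L_h + (-h)G \supseteq H_h \cap \mathcal{V}(P) \neq \emptyset$ whenever $P$ has a vertex at height $h$. Hence $Q$ has points at strictly positive and strictly negative heights, provided $P$ does, which it does since ${\bf 0} \in \text{int}(P)$. To rule out a supporting hyperplane of $Q$ through ${\bf 0}$, I would argue by contradiction. If some linear functional ${\bf u}$ satisfies ${\bf u}\ge 0$ on $Q$ and is nonconstant on $Q$, apply the inverse mutation. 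By involutivity, $P$ is recovered from the slices of $Q$ by Minkowski subtraction and addition of multiples of $G$, which moves points only within their own height hyperplane. A cleaner route is to show that $\dim Q = n$ and that the barycentric argument of \cite{ACGK} applies. This step I would quote as in \cite[Proposition 2]{ACGK}.

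\emph{Primitivity, the main step.} Every vertex $v$ of $Q$ is a vertex of some $S_h(P,{\bf w}) + hG$ with $h\ge 0$, or of some $L_h$ with $h<0$. Such a $v$ is a lattice point at height $h$.
\begin{itemize}
\item If $h = \pm 1$, then $v$ is primitive automatically, since $\langle {\bf w}, v\rangle = \pm 1$.
\item If $|h|\ge 2$ and $v$ were non-primitive, write $v = d v'$ with $d\ge 2$. Then $v'$ is a lattice point at height $h/d$ lying on the segment $[{\bf 0},v]$, and $v'$ lies in $Q$ because ${\bf 0}$ does.
\end{itemize}
I expect the obstacle to be showing that such $v$ cannot be a vertex of $Q$. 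The idea is to transport the question back to $P$ via the inverse mutation $\text{mut}_{-{\bf w}}(\cdot,G)$. That map sends $Q$ to $P$ and acts on each height-$h$ slice by a Minkowski operation with $\pm hG$. Under it, a vertex of $Q$ corresponds to a vertex of $P$, up to adding a vertex of $hG$, and one checks that the correspondence respects the ray structure through ${\bf 0}$. This shows that the vertices of $Q$ are exactly the images of vertices of $P$, which are primitive since $P$ is Fano.

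Concretely, I would prove that a vertex $v$ of $Q$ at height $h>0$ has the form $v = p + g_h$, where $p$ is a vertex of $P$ and $g_h$ is a vertex of $hG$, and then check primitivity of $p + g_h$ using the factor condition $(\ast)$ at height $-h$ on the opposite side. This is the part I expect to require the most care.
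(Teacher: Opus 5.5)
\textbf{There is a genuine gap in both halves of your argument.}

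\emph{Interior point.} You end by quoting \cite[Proposition 2]{ACGK}, which is the statement under proof. The facts you establish beforehand ($Q\supseteq S_0(P,{\bf w})$, and $Q$ has points at positive and negative heights) do not force ${\bf 0}\in\mathrm{int}(Q)$. The missing idea is to work in $M_\R$, using the inverse of the map $\varphi$ of Proposition~\ref{prop_ACGK}.

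Suppose ${\bf 0}\notin\mathrm{int}(Q)$. Choose $u\neq 0$ with $u\ge 0$ on $Q$, and set $u_{\min}:=\min_{g\in G}\langle u,g\rangle$ and $u':=u+u_{\min}{\bf w}$.
\begin{itemize}
\item For a vertex $v$ of $P$ at height $h_v\ge 0$, we have $v+h_vG\subseteq S_{h_v}(P,{\bf w})+h_vG\subseteq Q$. Hence $\langle u',v\rangle=\min_{g\in G}\langle u,v+h_vg\rangle\ge 0$.
\item For $h_v<0$, condition $(\ast)$ gives $v=r+|h_v|g'$ with $r\in L_{h_v}\subseteq Q$ and $g'\in G$. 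Hence $\langle u',v\rangle=\langle u,r\rangle+|h_v|(\langle u,g'\rangle-u_{\min})\ge 0$.
\end{itemize}
Since ${\bf 0}\in\mathrm{int}(P)$, this forces $u'=0$, i.e.\ $u=-u_{\min}{\bf w}$. But then $u_{\min}=0$ because $G\subset{\bf w}^\perp$, so $u=0$, a contradiction.

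\emph{Primitivity.} The obstacle you identify points the wrong way. A lattice point $v/d$ on $[{\bf 0},v]$ is never in tension with $v$ being a vertex, so no contradiction can come from $Q$ alone. Also, $(\ast)$ at height $-h$ is not what makes the argument work. What is actually needed is:
\begin{itemize}
\item[(i)] for every vertex $x$ of $Q$, at height $h$ of either sign, there is a vertex $g$ of $G$ with $x-hg\in\mcal V(P)$;
\item[(ii)] if $x=ky$ with $y\in N$ and $k\ge 2$, then $h/k=\langle{\bf w},y\rangle\in\Z$, so $x-hg=k\bigl(y-\tfrac{h}{k}g\bigr)$ would be a non-primitive vertex of $P$.
\end{itemize}
You only announce (i) for $h>0$ and do not prove it. For $h<0$ you substitute an appeal to involutivity, which by itself says nothing about individual vertices. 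Moreover, your claim that vertices of $Q$ are \emph{exactly} the images of those of $P$ is false: the square in Section~\ref{secCombinatorialMutations} mutates to a triangle.

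Statement (i) follows from the same dual trick. Pick $u$ minimized on $Q$ only at $x$, and generic enough that $u|_G$ has a unique minimizer $g\in\mcal V(G)$; put $u'=u+\langle u,g\rangle{\bf w}$.
\begin{itemize}
\item The computation above, with $0$ replaced by $\min_Q u=\langle u,x\rangle$, gives $\langle u',v\rangle\ge\langle u,x\rangle$ for every $v\in\mcal V(P)$, with equality only if $v=x-hg$.
\item On the other hand, $u'$ takes a value $\le\langle u,x\rangle$ somewhere on $P$. If $h<0$, use the point $x-hg\in x+|h|G\subseteq S_h(P,{\bf w})$. If $h\ge 0$, use the $S_h(P,{\bf w})$-summand of $x$.
\end{itemize}
So the minimum of $u'$ on $P$ is attained at a vertex, and that vertex must be $x-hg$. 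Combined with (ii), this gives primitivity of every vertex of $Q$.
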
	

\noindent
Remarkably, Ilten~\cite{Ilt} proved that if $Q$ is obtained from $P$ by a combinatorial mutation, then there exists a flat family of projective varieties over $\p^1$ whose fibers over $0$ and $\infty$ are the toric varieties associated with $P$ and $Q$, respectively.
In dimension two, such mutations are closely related to $\Q$-Gorenstein deformations of weighted projective planes studied by Hacking and Prokhorov \cite{HP}.

\section{Mutations of Fano simplices and admissible facets}
\label{secMutationsOfFanoSimplicesAndAdmissibleFacets}

In this section, we restrict our attention to Fano simplices and combinatorial mutations over facets. We introduce a distinguished class of facets, called admissible facets, and study the corresponding facet mutations. The significance of admissible facets will become apparent in Section \ref{secMarkovtypeEquationsOfFanoSimplices}, where they naturally give rise to arithmetic mutations of distinguished solutions of weighted Markov-type equations.

For mutations over admissible facets, the general construction of combinatorial mutation admits a particularly simple description. Let $P \subset N_{\mathbb R}$ be a Fano simplex and let $F$ be a facet of $P$ with supporting hyperplane
\[
H_F:=\{u\in N_{\mathbb R}\mid \langle n_F,u\rangle=d_F\},
\]
where $n_F$ is the primitive outward normal vector of $F$ and $d_F>0$ is the affine distance from the origin to $F$.	
\begin{definition}\label{def_admissible}
	A facet $F$ of $P$ is called {\em admissible} if $F$ is a $d_F$-fold dilation of some lattice     		polytope up to translation. 
	Equivalently, $F$ is admissible if for a (any) vertex $v_0$ of $F$, 
	there exists a lattice polytope $G_{F,v_0} \subset N_\R$ with a vertex at the origin
	such that 
	\[
		F = v_0 + d_F G_{F,v_0}.
	\]
	We denote the number of admissible facets of $P$ by $\mathrm{ad}(P)$. 
\end{definition}

\noindent
One can easily check that the {\em admissibility} is well-defined regardless of the choice of $v_0$, whereas $G_{F,v_0}$ does depend on the choice. 

If $F$ is admissible, then there is a canonical way of taking a height vector and a factor so that we may define a combinatorial mutation {\em over a facet}
as follows. Take the primitive inward normal vector ${\bf w}_F := -n_F$ as a height vector. We also fix a vertex $v_0 \in F$ and let $G := G_{F,v_0} \in {\bf w}_F^\perp$ be defined above so that 
\[
	h_{\min}(P,{\bf w}_F) = -d_F.
\]
If we take $L_{-d_F} := \{ v_0 \}$, then it satisfies the $(\ast)$ condition:
\[
	\underbrace{H_{-d_F}(P,{\bf w}_F) \cap \mcal{V}(P)}_{= \mcal{V}(F)} \hs{0.2cm} \subseteq
\hs{0.2cm} \underbrace{L_{-d_F} + d_F G}_{= F} \hs{0.2cm} \subseteq \hs{0.2cm}
	S_{-d_F}(P,{\bf w}_F) = F.
\]	
Since there is no vertex of $P$ not in $\mcal{V}(F)$ with negative ${\bf w}_F$-height as $P$ is a simplex, we may conclude that $G$ is a factor of $P$ with $L_{-d_F} = \{v_0 \}$.  

\begin{definition}
	For an admissible facet $F$ of $P$, the combinatorial mutation 
	\[
		\mu_{F,v_0}(P) := \text{mut}_{{\bf w}_F}(P,G_{F,v_0}; \{L_{-d_F} \})
	\]
	is called a {\em mutation of $P$ over the facet $F$}.
\end{definition}

\noindent
Note that if $v_1$ is another vertex of $F$, then the factors $G_{F,v_0}$ and $G_{F,v_1}$ are differ by 
\[
	G_{F,v_1} = G_{F,v_0} + (v_0 - v_1).
\]
On the other hand, one easily checks that 
$\text{mut}_{\bf w}(P,G;\{L_h\}) \cong \text{mut}_{\bf w}(P,G+v;\{L_h + hv\})$
for any $v \in N$ with ${\bf w}(v) = 0$.
In other words, the choice of the factor, up to translation, 
does not affect the combinatorial mutation up to unimodular equivalence.
(See \cite[Proposition 1]{ACGK}.)
Thus the combinatorial mutation of $P$ over $F$ is independent of the choice of a vertex of $F$ and hence we may simply denote it by $\mu_F(P):= \mu_{F,v_0}(P)$ without ambiguity.
We are now ready to state our main theorem.

\begin{theorem}\label{thm_main}
	Let $P \subset N_\R$ be an $n$-dimensional Fano simplex and $F$ an admissible facet of $P$. Then 
	\[
		\mathrm{ad}(P) = \mathrm{ad}(\mu_F(P)).
	\]
\end{theorem}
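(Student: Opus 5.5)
The plan is to make $\mu_F(P)$ completely explicit, set up a bijection between the facets of $P$ and those of $\mu_F(P)$, and check that admissibility matches facet by facet.

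\textbf{Step 1: explicit form of the mutation.} Write $P=\mathrm{conv}(u,v_0,v_1,\dots,v_n)$, where $F=\mathrm{conv}(v_0,\dots,v_n)$ and $u$ is the vertex opposite $F$. Put $d=d_F$, ${\bf w}={\bf w}_F$, $h:=\langle {\bf w},u\rangle>0$ and $g_j:=(v_j-v_0)/d\in N\cap{\bf w}^\perp$, so that $G=\mathrm{conv}(0,g_1,\dots,g_n)$ and $g_0=0$. In Definition~\ref{def_comb_mutation}, $L_{-d}=\{v_0\}$, and for $0\le k\le h$ we have $S_k(P,{\bf w})+kG\subseteq \mathrm{conv}(v_0,u+hG)$ by convexity. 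Hence
\[
Q:=\mu_F(P)=\mathrm{conv}\bigl(v_0,\;u,\;u+hg_1,\dots,u+hg_n\bigr),
\]
which is again a simplex. Its facet $F'=u+hG$ lies on $\langle{\bf w},\cdot\rangle=h$, so $d_{F'}=h$ and $F'$ is admissible; mutating back over $F'$ returns $P$, consistent with involutivity. This gives the bijection $F\leftrightarrow F'$ together with $F_i\leftrightarrow F'_i$ for $0\le i\le n$. Here $F_i$ is the facet of $P$ opposite $v_i$, and $F'_i$ is the facet of $Q$ opposite $u+hg_i$ (opposite $u$ when $i=0$). It remains to prove that $F_i$ is admissible if and only if $F'_i$ is, for each $i$.

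\textbf{Step 2: arithmetic reformulation of admissibility.} For a simplex, a facet $\mathrm{conv}(x_0,\dots,x_{n-1})$ at distance $d'$ is admissible exactly when $x_j-x_0\in d'N$ for all $j$. For $F_i$ with $i\ge1$ the vertex set is $\{u,v_0\}\cup\{v_0+dg_j\}_{j\ne 0,i}$. For $F'_i$ it is $\{v_0,u\}\cup\{u+hg_j\}_{j\ne 0,i}$. In both cases the difference set contains $u-v_0$, and the remaining differences are $dg_j$, respectively $hg_j$ (modulo $u-v_0$). The case $i=0$ is analogous, with $v_0$ replaced by another base vertex.

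\textbf{Step 3: comparing the distances.} I would compute $d_{F_i}$ and $d_{F'_i}$ via $d_{F_i}\cdot\mathrm{Vol}(F_i)=\mathrm{Vol}(\mathrm{conv}(0,F_i))$, where $\mathrm{Vol}$ denotes normalized lattice volume. This expresses both sides as determinants in the data $u,v_0,g_j$. Expanding along the ${\bf w}$-coordinate, $\mathrm{conv}(0,F_i)$ and $\mathrm{conv}(0,F'_i)$ should have determinants that differ by exchanging the roles of $d$ and $h$ in a controlled way. Concretely, I expect $\det(F'_i\cup\{0\})/\det(F_i\cup\{0\})=(h/d)^{n-1}\cdot(\text{factor})$, and the lattice volumes of the facets to scale compatibly. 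I would also use the barycentric relation among the vertices of $P$ (the weights $\lambda$ with $\lambda_u u+\sum\lambda_j v_j=0$) to write $d_{F_i}$ in terms of these weights; the analogous relation on $Q$ is obtained from the one on $P$ by a substitution in the weights. The target is the equivalence
\[
d_{F_i}\mid (u-v_0),\ d_{F_i}\mid d\,g_j\ \ (\forall j\neq 0,i)\iff d_{F'_i}\mid (u-v_0),\ d_{F'_i}\mid h\,g_j\ \ (\forall j\neq 0,i),
\]
understood inside the lattice $N$ intersected with the respective hyperplanes.

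\textbf{Main obstacle.} Step 3 is the hard part: $d_{F_i}$ and $d_{F'_i}$ need not be equal, so admissibility does not transfer by a single unimodular map. The same holds for the sublattice generated by differences. As a first attempt I would look for a linear map on the cone over $F_i$ (a shear $x\mapsto x+\langle {\bf w},x\rangle g$ with $g\in{\bf w}^\perp$, possibly after rescaling by $h/d$) that carries $\mathrm{conv}(0,F_i)$ onto $\mathrm{conv}(0,F'_i)$ up to the known change of lattice index. I would then track how divisibility by the facet distance transforms under that map, using the condition $v_j-v_0\in dN$ to absorb the index change. If no such map exists, I would fall back on the direct determinant/gcd computation of Step 3 in coordinates adapted to ${\bf w}$.
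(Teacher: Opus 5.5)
\textbf{There is a genuine gap: the proposal sets up the problem correctly but does not prove the key step.} Your Step~1 is correct and matches the paper: $\mu_F(P)=\mathrm{conv}(v_0,u+hG)$, the new facet $F'=u+hG$ is admissible with $d_{F'}=h$, and your facet bijection is the right one. (Minor slip: a facet of an $n$-simplex has $n$ vertices, so $F=\mathrm{conv}(v_0,\dots,v_{n-1})$.) The problem is that Step~3 is a list of expectations and fallback strategies, not an argument. The belief guiding it, that $d_{F_i}$ and $d_{F_i'}$ need not agree, is false.

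For $i\ge 1$, the hyperplane spanned by $F_i$ contains $u$, $v_0$ and, since $v_0+dg_j\in F_i$, the directions $g_j$ for $j\ne 0,i$. Hence it contains every $u+hg_j$. So $F_i$ and $F_i'$ span the same hyperplane and have the same distance $\delta$. The transfer is then one line. The condition $\delta\mid u-v_0$ is part of either admissibility condition, so $\delta\mid\langle{\bf w},u-v_0\rangle=h+d$, hence $\delta\mid(h+d)g_j$. Therefore $\delta\mid dg_j$ if and only if $\delta\mid hg_j$. This is the content of the paper's Case~II, which it phrases via an lcm/gcd argument. Computing distances through volumes or determinants would not by itself settle anything, since admissibility is a divisibility condition on edge vectors.

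The second gap is the facet opposite $v_0$, which you call ``analogous'' in Step~2. It is not. The facet $F_0'=\mathrm{conv}(v_0,u+hg_1,\dots,u+hg_{n-1})$ does not contain $u$. It shares no vertex with $F_0$, since the points $u+hg_j$ have ${\bf w}$-height $h>0$. It lies in a different hyperplane, since it contains $v_0$. So neither the edge $u-v_0$ nor the distance comes for free, and two separate facts are needed; this is the paper's Case~III.

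\begin{enumerate}
\item[(a)] Every edge vector of $F_0'$ is divisible by $d_{F_0}$. Write $v_0-(u+hg_j)=(v_j-u)-(h+d)g_j$. By admissibility of $F_0$, $d_{F_0}$ divides $v_j-u$, and therefore also divides $\langle{\bf w},u-v_j\rangle=h+d$.
\item[(b)] $d_{F_0'}\mid d_{F_0}$. Let ${\bf w}_0$ be the primitive inward normal of $F_0$. The vector ${\bf w}_0+\frac{\langle{\bf w}_0,v_0-v_j\rangle}{d}\,{\bf w}$ is integral, because $v_0-v_j=-dg_j$; its coefficient is independent of $j\neq 0$. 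A direct check shows it takes the value $-d_{F_0}$ at $v_0$ and at every $u+hg_j$. Hence it equals $k$ times the primitive inward normal of $F_0'$ for a positive integer $k$ with $k\,d_{F_0'}=d_{F_0}$.
\end{enumerate}

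Together, (a) and (b) give admissibility of $F_0'$. Applying (b) to the inverse mutation, which returns $P$ with the same labelling, also gives $d_{F_0}\mid d_{F_0'}$. So every facet other than $F$ in fact keeps its distance, contrary to your expectation. Combined with the case $i\ge1$ and involutivity, this yields $\mathrm{ad}(P)=\mathrm{ad}(\mu_F(P))$. Without these ingredients your proposal does not establish the theorem.
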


\begin{proof}
	Let $F$ be an admissible facet of $P$.
	Without loss of generality, we may assume that 
	\[
   		\mcal{V}(P) = \{ v_0, v_1, \dots, v_n \} \quad \text{and} \quad 
   		F = \mathrm{conv}\{v_0, v_1, \dots, v_{n-1}\}.
	\]
	We first fix some notation as follows. 
	\begin{itemize}
		\item $F_i := \text{conv}(v_0,v_1,\dots,\hat{v}_i,\dots,v_n)$, in particular $F_n = F$,  
   		\item ${\bf w}_n := -n_{F}$, the inward primitive normal vector of the facet $F$,
   		\item $h_{\min} := -d_F = \langle {\bf w}_n, v_0 \rangle = \dots = \langle {\bf w}_n, v_{n-1} \rangle\in \Z_{<0}$,
   		\item $h_{\max} := \langle {\bf w}_n, v_n \rangle \in \Z_{>0}$,
   		\item $L_{h_{\min}} := \{v_0\}$,
   		\item $G := \displaystyle \frac{1}{d_F} (F - v_0)$.
	\end{itemize}   
	By definition, we have that 
	\[
		\mu_F(P) := \mu_{F,v_0}(P) = 
		\text{mut}_{{\bf w}_n}(P,G; \{L_{h_{\min}} \}) = \text{conv}(v_0, v_n + h_{\max}G).
	\]	
   	We label each vertices of $\mu_F(P)$ by  
   	\[
   		v_i' := \begin{cases}
   			v_{n-i} & \text{if $i=0$ or $i=n$,} \\
   			v_n - \frac{h_{\max}}{h_{\min}} (v_i - v_0) & \text{otherwise}.
   		\end{cases}
   	\]
	See Figure \ref{figure_mutation}.

\tdplotsetmaincoords{70}{120}

\begin{figure}[htbp]
\centering
\begin{adjustbox}{center}

\begin{tikzpicture}[tdplot_main_coords, scale=2.2]

  \draw[gray!60,->] (0,0,0) -- (1.7,0,0) node[below right] {\small $$};
  \draw[gray!60,->] (0,0,0) -- (0,1.2,0) node[above left] {\small $$};
  \draw[gray!60,->] (0,0,0) -- (0,0,0.8) node[above] {\small $$};
  \filldraw[gray!70] (0,0,0) circle (0.8pt);

  \coordinate (W2) at (1.7, 0.4, 0);
  \coordinate (W3) at (-0.2, 0.4, 0);
  \coordinate (W0) at (0, -0.1, 0.4);
  \coordinate (W1) at (-0.2, -0.5, 0);

  \draw[very thick, blue] (W2) -- (W1);
  \draw[very thick, blue] (W1) -- (W0);
  \draw[very thick, blue] (W0) -- (W2);

  \draw[very thick] (W2) -- (W3);
  \draw[very thick] (W3) -- (W0);
  \draw[dashed, very thick] (W3) -- (W1);

  \node at ($(W0) + (0.1,-0.15,0)$) [above right] {\small $v_0$};
  \node at ($(W1) + (-0.1,0,0.1)$) [below left] {\small $v_1$};
  \node at ($(W2) + (0.1,-0.1,-0.1)$) [right] {\small $v_2$};
  \node at ($(W3) + (0,0.35,0)$) [left] {\small $v_3$};

\end{tikzpicture}
\hspace{2cm}

\begin{tikzpicture}[tdplot_main_coords, scale=2.2]

  \draw[gray!60,->] (0,0,0) -- (1.7,0,0) node[below right] {\small $$};
  \draw[gray!60,->] (0,0,0) -- (0,1.2,0) node[above left] {\small $$};
  \draw[gray!60,->] (0,0,0) -- (0,0,0.8) node[above] {\small $$};
  \filldraw[gray!70] (0,0,0) circle (0.8pt);

  \coordinate (W2) at (1.7, 0.4, 0);
  \coordinate (W3) at (-0.2, 0.4, 0);
  \coordinate (W0) at (0, -0.1, 0.4);
  \coordinate (W1) at (-0.2, -0.5, 0);

  \coordinate (U1) at (-0.4, -0.0, -0.4);
  \coordinate (U2) at (1.5, 0.9, -0.4);

  \draw[dashed, semithick, blue] (W2) -- (W1);
  \draw[dashed, semithick, blue] (W1) -- (W0);
  \draw[dashed, semithick, blue] (W0) -- (W2);

  \draw[very thick] (W3) -- (W0);

  \draw[very thick] (W0) -- (U2);
  \draw[dashed, very thick, red] (U1) -- (W3);
  \draw[dashed, very thick, red] (U1) -- (U2);
  \draw[very thick, red] (U2) -- (W3);

  \draw[dashed, very thick] (W0) -- (U1);

  \node at ($(W0) + (0.1,-0.15,0)$) [above right] {\small $v_0 = v_3'$};
  \node at ($(W1) + (-0.1,0,0.1)$) [below left] {\small $v_1$};
  \node at ($(W2) + (0.1,-0.1,-0.1)$) [right] {\small $v_2$};
  \node at ($(W3) + (0,-0.05,0.05)$) [right] {\small $v_3 = v_0'$};

  \node at ($(U1) + (0,-0.33,-0.05)$) [right] {\small $v_1'$};
  \node at ($(U2) + (-0.05,-0.03,0)$) [left] {\small $v_2'$};

\end{tikzpicture}
\end{adjustbox}

\caption{Combinatorial mutation of a Fano simplex; $F_n$ and $F_n'$}
\label{figure_mutation}
\end{figure}

\noindent
We will show that if $F_i$ is admissible in $P$, then 
$F_i' := \text{conv} (v_0',v_1',\dots,\hat{v}_i',\dots,v_n')$ is also admissible in 
$\mu_F(P)$. This shows that $\mathrm{ad}(P) \leq \mathrm{ad}(\mu_F(P))$ and
the proof follows from the involutivity of the combinatorial mutation.

\noindent
Denote by $d_i \in \Z_{>0}$ the affine distance from $F_i$ to the origin (and similarly by $d_i'$ for $F_i'$) and define 
\[
	\ell_{i,j} := \mathrm{lcm}(d_0, \dots, \hat{d}_i, \dots, \hat{d}_j, \dots, d_n).
\] 
We further denote by $n_{F_i}$ the outward primitive integral vector normal to $F_i$ and set ${\bf w}_i := -n_{F_i}$ (and similarly by ${\bf w}_i' = -n_{F_i'}$). 
\vs{0.1cm}

\noindent
{\bf Case I: $i = n$.} 
\vs{0.1cm}

\noindent
Note that $F_n = F$ is admissible by assumption. Since ${\bf w}_n' = -n_{F_n'} = n_{F_n}$, we have that
\[
	h_{\min}' := \langle {\bf w}_n', F_n' \rangle = \langle n_{F_n}, v_0' \rangle = \langle n_{F_n}, v_n 	\rangle = -h_{\max}. 
\]
Since $d_n' = h_{\max}$, it follows that $F_n' - v_0' = h_{\max}G = d_n'G$ is divisible by $d_n'$, and therefore $F_n'$ is admissible. See Figure \ref{figure_mutation}; the blue and red triangles indicate $F_n$ and $F_n'$, respectively.
\vs{0.1cm}

\noindent
{\bf Case II: $i \in \{1,\dots,n-1\}$.} 
\vs{0.1cm}

\tdplotsetmaincoords{70}{120}

\begin{figure}[htbp]
\centering
\begin{adjustbox}{center}

\begin{tikzpicture}[tdplot_main_coords, scale=2.2]

  \draw[gray!60,->] (0,0,0) -- (1.7,0,0) node[below right] {\small $$};
  \draw[gray!60,->] (0,0,0) -- (0,1.2,0) node[above left] {\small $$};
  \draw[gray!60,->] (0,0,0) -- (0,0,0.8) node[above] {\small $$};
  \filldraw[gray!70] (0,0,0) circle (0.8pt);

  \coordinate (W2) at (1.7, 0.4, 0);
  \coordinate (W3) at (-0.2, 0.4, 0);
  \coordinate (W0) at (0, -0.1, 0.4);
  \coordinate (W1) at (-0.2, -0.5, 0);

  \draw[very thick] (W2) -- (W1);
  \draw[very thick] (W1) -- (W0);
  \draw[very thick, blue] (W0) -- (W2);

  \draw[very thick, blue] (W2) -- (W3);
  \draw[very thick, blue] (W3) -- (W0);
  \draw[dashed, very thick] (W3) -- (W1);

  \node at ($(W0) + (0.1,-0.15,0)$) [above right] {\small $v_0$};
  \node at ($(W1) + (-0.1,0,0.1)$) [below left] {\small $v_1$};
  \node at ($(W2) + (0.1,-0.1,-0.1)$) [right] {\small $v_2$};
  \node at ($(W3) + (0,0.35,0)$) [left] {\small $v_3$};

\end{tikzpicture}
\hspace{2cm}

\begin{tikzpicture}[tdplot_main_coords, scale=2.2]

  \draw[gray!60,->] (0,0,0) -- (1.7,0,0) node[below right] {\small $$};
  \draw[gray!60,->] (0,0,0) -- (0,1.2,0) node[above left] {\small $$};
  \draw[gray!60,->] (0,0,0) -- (0,0,0.8) node[above] {\small $$};
  \filldraw[gray!70] (0,0,0) circle (0.8pt);

  \coordinate (W2) at (1.7, 0.4, 0);
  \coordinate (W3) at (-0.2, 0.4, 0);
  \coordinate (W0) at (0, -0.1, 0.4);
  \coordinate (W1) at (-0.2, -0.5, 0);

  \coordinate (U1) at (-0.4, -0.0, -0.4);
  \coordinate (U2) at (1.5, 0.9, -0.4);

  \draw[dashed, semithick] (W2) -- (W1);
  \draw[dashed, semithick] (W1) -- (W0);
  \draw[dashed, semithick] (W0) -- (W2);

  \draw[very thick,red] (W3) -- (W0);

  \draw[very thick, red] (W0) -- (U2);
  \draw[dashed, very thick] (U1) -- (W3);
  \draw[dashed, very thick] (U1) -- (U2);
  \draw[very thick, red] (U2) -- (W3);

  \draw[dashed, very thick] (W0) -- (U1);

  \node at ($(W0) + (0.1,-0.15,0)$) [above right] {\small $v_0 = v_3'$};
  \node at ($(W1) + (-0.1,0,0.1)$) [below left] {\small $v_1$};
  \node at ($(W2) + (0.1,-0.1,-0.1)$) [right] {\small $v_2$};
  \node at ($(W3) + (0,-0.05,0.05)$) [right] {\small $v_3 = v_0'$};

  \node at ($(U1) + (0,-0.33,-0.05)$) [right] {\small $v_1'$};
  \node at ($(U2) + (-0.05,-0.03,0)$) [left] {\small $v_2'$};

\end{tikzpicture}
\end{adjustbox}

\caption{$F_i$ and $F_i'$ for $i \neq 0,n$.}
\label{figure_mutation2}
\end{figure}

\noindent
Suppose that $F_i$ is admissible. Then it is enough to prove that 
\[
	v_0' - v_j' \equiv 0 \quad (\mathrm{mod} ~d_i')
\]
for every $j=1,\dots,\hat{i}, \dots, n-1$. We first observe that 
\[
	F_i' = \mathrm{conv}\{v_0', \dots, \hat{v}_i', \dots, v_n'\} = 
	\mathrm{conv} \left\{ v_n, \left\{v_n - \frac{h_{\max}}{h_{\min}}(v_j-v_0) ~|~ j \in \{1,\dots,n-1\} \setminus \{i\} \right\}, v_0 
	\right\}
\]
which implies that $F_i$ and $F_i'$ lie in the same hyperplane. In particular, they have the same affine distance from the origin and so we have 
\[
	d_i = d_i'.
\] 
Moreover, the admissibility of $F_i$ implies that
\begin{itemize}
	\item $v_0' - v_n' = v_n - v_0 \equiv 0 ~(\mathrm{mod}~d_i)$ since $v_0, v_n \in F_i$, and
	\item $v_0' - v_j' = \displaystyle \underbrace{\frac{1}{d_n}(v_0 - v_j)}_{\text{integral vector}} \times ~h_{\max}$ for $j \in \{1,\dots,n-1\} \setminus \{i\}$.
\end{itemize}
We also record the following divisibility properties. Let
\[
j\in \{1,\ldots,n-1\}\setminus\{i\}.
\]

\begin{itemize}
\item Since both $F_i$ and $F_n$ are admissible and
$
v_0,v_j\in F_i\cap F_n,
$
we have
\[
d_i \mid (v_0-v_j)
\qquad\text{and}\qquad
d_n \mid (v_0-v_j), 
\]
and therefore
$
\operatorname{lcm}(d_i,d_n)\mid (v_0-v_j).
$
\vs{0.1cm}

\item Since
\[
h_{\max}
=
\langle -n_{F_n},v_n\rangle
=
\langle -n_{F_n},(v_n-v_0) + v_0 \rangle
=
\langle -n_{F_n},v_n-v_0\rangle-d_n,
\]
we have
\[
h_{\max}+d_n
=
\langle -n_{F_n},v_n-v_0\rangle .
\]
On the other hand, the admissibility of $F_i$ implies
\[
d_i\mid (v_n-v_0).
\]
Thus
\[
d_i
\mid
\langle -n_{F_n},v_n-v_0\rangle
=
h_{\max}+d_n.
\]
Consequently,
\[
\gcd(d_i,d_n)\mid h_{\max}.
\]
\end{itemize}

\noindent
Thus we have $d_id_n = \mathrm{lcm}(d_i,d_n) \cdot \mathrm{gcd}(d_i,d_n) ~|~ (v_0 - v_j) \cdot h_{\max}$,
and therefore
\[
	d_i ~|~ \displaystyle \frac{1}{d_n}(v_0 - v_j) \times h_{\max} = v_0' - v_j'.
\]
This proves our claim. 
See Figure \ref{figure_mutation2}; the blue and red triangles indicate $F_1$ and $F_1'$, respectively.

\vs{0.1cm}
\noindent
{\bf Case III: $i=0$.} 
\vs{0.1cm}

\tdplotsetmaincoords{70}{120}

\begin{figure}[htbp]
\centering
\begin{adjustbox}{center}

\begin{tikzpicture}[tdplot_main_coords, scale=2.2]

  \draw[gray!60,->] (0,0,0) -- (1.7,0,0) node[below right] {\small $$};
  \draw[gray!60,->] (0,0,0) -- (0,1.2,0) node[above left] {\small $$};
  \draw[gray!60,->] (0,0,0) -- (0,0,0.8) node[above] {\small $$};
  \filldraw[gray!70] (0,0,0) circle (0.8pt);

  \coordinate (W2) at (1.7, 0.4, 0);
  \coordinate (W3) at (-0.2, 0.4, 0);
  \coordinate (W0) at (0, -0.1, 0.4);
  \coordinate (W1) at (-0.2, -0.5, 0);

  \draw[very thick, blue] (W2) -- (W1);
  \draw[very thick] (W1) -- (W0);
  \draw[very thick] (W0) -- (W2);

  \draw[very thick, blue] (W2) -- (W3);
  \draw[very thick] (W3) -- (W0);
  \draw[dashed, very thick, blue] (W3) -- (W1);

  \node at ($(W0) + (0.1,-0.15,0)$) [above right] {\small $v_0$};
  \node at ($(W1) + (-0.1,0,0.1)$) [below left] {\small $v_1$};
  \node at ($(W2) + (0.1,-0.1,-0.1)$) [right] {\small $v_2$};
  \node at ($(W3) + (0,0.35,0)$) [left] {\small $v_3$};

\end{tikzpicture}
\hspace{2cm}

\begin{tikzpicture}[tdplot_main_coords, scale=2.2]

  \draw[gray!60,->] (0,0,0) -- (1.7,0,0) node[below right] {\small $$};
  \draw[gray!60,->] (0,0,0) -- (0,1.2,0) node[above left] {\small $$};
  \draw[gray!60,->] (0,0,0) -- (0,0,0.8) node[above] {\small $$};
  \filldraw[gray!70] (0,0,0) circle (0.8pt);

  \coordinate (W2) at (1.7, 0.4, 0);
  \coordinate (W3) at (-0.2, 0.4, 0);
  \coordinate (W0) at (0, -0.1, 0.4);
  \coordinate (W1) at (-0.2, -0.5, 0);

  \coordinate (U1) at (-0.4, -0.0, -0.4);
  \coordinate (U2) at (1.5, 0.9, -0.4);

  \draw[dashed, semithick] (W2) -- (W1);
  \draw[dashed, semithick] (W1) -- (W0);
  \draw[dashed, semithick] (W0) -- (W2);

  \draw[very thick] (W3) -- (W0);

  \draw[very thick, red] (W0) -- (U2);
  \draw[dashed, very thick] (U1) -- (W3);
  \draw[dashed, very thick, red] (U1) -- (U2);
  \draw[very thick] (U2) -- (W3);

  \draw[dashed, very thick, red] (W0) -- (U1);

  \node at ($(W0) + (0.1,-0.15,0)$) [above right] {\small $v_0 = v_3'$};
  \node at ($(W1) + (-0.1,0,0.1)$) [below left] {\small $v_1$};
  \node at ($(W2) + (0.1,-0.1,-0.1)$) [right] {\small $v_2$};
  \node at ($(W3) + (0,-0.05,0.05)$) [right] {\small $v_3 = v_0'$};

  \node at ($(U1) + (0,-0.33,-0.05)$) [right] {\small $v_1'$};
  \node at ($(U2) + (-0.05,-0.03,0)$) [left] {\small $v_2'$};

\end{tikzpicture}
\end{adjustbox}

\caption{$F_0$ and $F_0'$}
\label{figure_mutation3}
\end{figure}

\noindent
Assume that $F_0$ is admissible. To prove that $F_0'$ is also admissible, we need to show that
\[
	v_n' - v_j' \equiv 0 \quad (\mathrm{mod} ~d_0')
\]
for every $j=1,\dots,n-1$. Consider the decomposition
\begin{equation}\label{eq_decomposition}
	v_n' - v_j' = \underbrace{(v_0' - v_j')}_{\text{(1)}} + \underbrace{(v_j - v_0')}_{\text{(2)}}  + \underbrace{(v_n' - v_j)}_{\text{(3)}}
\end{equation}
and observe that 
\[
	(1) = \text{$v_0' - v_j' = \displaystyle \frac{h_{\max}}{d_n(=-h_{\min})}(v_0 - v_j) = \displaystyle \frac{\langle {\bf w}_n, v_n \rangle}{d_n}(v_0 - v_j)$.}
\]
Since $v_n' = v_0$, we have 
\[
	\text{(1) + (3)} = \frac{d_n + \langle {\bf w}_n, v_n \rangle}{d_n}(v_0-v_j).
\]
Since $d_n + \langle {\bf w}_n, v_n \rangle = {\bf w}_n \cdot (v_n - v_j)$ and 
$(v_n - v_j)$ is divisible by $d_0$, the scalar
\[
d_n+\langle {\bf w}_n,v_n\rangle
\]
is divisible by \(d_0\). Moreover, since $F_n$ is admissible, the vector 
\[
	\frac{1}{d_n}(v_0-v_j)
\]
is integral. Thus the entire vector (1) + (3) is divisible by $d_0$.
For the vector (2), note that $v_j - v_0' = v_j - v_n$ is divisible by $d_0$ by the admissibility of $F_0$. Therefore, we obtain
\[
	v_n' - v_j' = (1) + (2) + (3) \equiv 0 \quad (\mathrm{mod} ~d_0) \quad \text{for every $j=1,\dots,n-1$.}
\]

We complete the proof by showing that
\[
d_0' \mid d_0.
\]
More precisely, we will prove that $F_0'$ is contained in a hyperplane of the form
\begin{equation}\label{eq_new_hyperplane}
{\bf u}_0' \cdot {\bf x} = -d_0,
\end{equation}
where ${\bf u}_0'$ is an integral (not necessarily primitive) vector. Since $d_0'$ is the lattice distance from the origin to $F_0'$, it then follows that
\[
d_0' \mid d_0.
\]
To do this, let us consider the integral vector
\[
	{\bf u}_0' := d_n {\bf w}_0 + (d_0 + \langle {\bf w}_0, v_0 \rangle) {\bf w}_n. 
\]
Clearly, ${\bf u}_0'$ is a nonzero vector (since ${\bf w}_0$ and ${\bf w}_n$ are linearly independent). Moreover,
\[
	\begin{array}{ccl}\vs{0.2cm}
		{\bf u}_0' \cdot \underbrace{(v_n' - v_j')}_{=(1)+(2)+(3)} & = & {\blue{\langle d_n {\bf w}_0, v_n' - v_j' \rangle}} + {\red{\langle (d_0 + \langle {\bf w}_0, v_0 \rangle) {\bf w}_n, v_n' - v_j' \rangle}} \\ \vs{0.1cm} 
					& = & \displaystyle {\blue{\langle d_n {\bf w}_0, \underbrace{\frac{d_n + \langle {\bf w}_n, v_n \rangle}{d_n}(v_0-v_j)}_{=(1)+(3)} \rangle}}
+ {\red{\langle (d_0 + \langle {\bf w}_0, v_0 \rangle) {\bf w}_n, \underbrace{v_j - v_n}_{=(2)} \rangle }}
\\ \vs{0.2cm}
					& = & \langle {\bf w}_n, v_n - v_j \rangle \left( 
			\langle {\bf w}_0, v_0 - v_j \rangle - (d_0 + \langle {\bf w}_0, v_0 \rangle)
\right)
\quad (\text{since} ~d_n = -\langle {\bf w}_n, v_j \rangle)
\\
					& = & 0 \quad (\text{since} ~\langle {\bf w}_0, v_j \rangle = -d_0)
	\end{array}
\]
for every $j = 1,\dots,n-1$. Here, we used the fact that the term (2) in \eqref{eq_decomposition} is orthogonal to ${\bf w}_0$ and the term (1) + (3) 
is orthogonal to ${\bf w}_n$. 
Thus ${\bf u}_0'$ is normal to $F_0'$ and 
the equation of the hyperplane containing $F_0'$ is given by 
\begin{equation}\label{eq_hyperplane}
	{\bf u}_0' \cdot {\bf x} = {\bf u}_0' \cdot v_n' = {\bf u}_0' \cdot v_0 = -d_0d_n \quad \text{(since ${\bf w}_n \cdot v_0 = -d_n$.)}
\end{equation}
On the other hand, we note that 
\[
	d_0 + \langle {\bf w}_0, v_0 \rangle = \langle {\bf w}_0, v_0 - v_j \rangle \quad \quad j \neq 0, n,
\]
and $v_0 - v_j$ is divisible by $d_n$ by the admissibility of $F_n$. Therefore, we have 
$
	d_n ~|~ d_0 + \langle {\bf w}_0, v_0 \rangle
$
and the hyperplane equation \eqref{eq_hyperplane} can be reduced into
\[
	\frac{1}{d_n} {\bf u}_0' \cdot {\bf x} = -d_0
\]
where $\displaystyle \frac{1}{d_n} {\bf u}_0'$ is integral. This implies that the affine distance $d_0'$ of the hyperplane should divide $d_0$. This completes the proof.
See Figure \ref{figure_mutation3}; the blue and red triangles indicate $F_0$ and $F_0'$, respectively.

\end{proof}

\begin{remark}
	In the proof of Theorem \ref{thm_main}, we saw that 
	\[
		d_n' = h_{\max},  \quad d_i' = d_i ~(i \neq 0, n),
		\quad \text{and that \hs{0.1cm} $d_0' ~|~ d_0$.}
	\]
	Applying the same argument to the inverse mutation $\mu_{F_n'}$, 
	one immediately checks that $d_0 ~|~ d_0'$. Hence we obtain $d_0' = d_0$. Summing up, we get
	\begin{equation}\label{eq_aff_distance_change}
		d_i' = \begin{cases}
			d_i & i = 0, \dots, n-1 \\
			h_{\max} & i=n, \quad \text{$h_{\max} = \langle {\bf w}_n, v_n \rangle$: maximal height of
			$P$ with respect to ${\bf w}_n$.}
		\end{cases}
	\end{equation}  
\end{remark}

\begin{example} Let $P$ be a Fano triangle with vertices $(0,1), (1,-2),$ and $(-1,-2)$. By calculating supporting hyperplanes of $P$, one can easily check that $\mathrm{ad}(P) = 3$. The mutation $\mu_F(P)$ along the facet $F = \mathrm{conv} \{ (-1,-2), (1,-2) \}$ 
has three edges with defining equations
\[
	y - 3x = 1, \quad y = 1, \quad 3x - 2y = 1,
\]
and hence we have $\mathrm{ad}(\mu_F(P)) = 3$. Similarly, it follows from the definition that any $n$-dimensional reflexive simplex has $\mathrm{ad} = n+1$. See Figure \ref{figure_mutation_Fano_triangle_full_rank}.

\begin{figure}[htbp]
\centering
\begin{center}
\begin{tikzpicture}[scale=0.8]

  \begin{scope}
    \draw[step=1cm, gray!50, thin] (-2.5,-2.5) grid (2.5,1.5);

    \fill[gray] (0,0) circle (3.5pt);
    \fill[black] (0,1) circle (3.5pt);
    \fill[black] (1,-2) circle (3.5pt);
    \fill[black] (-1,-2) circle (3.5pt);
    \fill[black] (0,-2) circle (3.5pt);
    \draw[very thick, blue] (1,-2) -- (0,1) -- (-1,-2) -- cycle;

 \draw[thick, purple] (0,0) -- (1,0);

    \node[below right] at (1,0) {\footnotesize $G$};
    \node[above right] at (0,1) {\footnotesize $(0,1)$};
    \node[below right] at (1,-2) {\footnotesize $(1,-2)$};
    \node[below left] at (-1,-2) {\footnotesize $(-1,-2)$};
  \end{scope}

  \draw[decorate, decoration={zigzag, segment length=4, amplitude=2}, thick, ->]
    (3.2,0) -- (4.4,0);

  \begin{scope}[xshift=8.0cm]
    \draw[step=1cm, gray!50, thin] (-2.5,-2.5) grid (2.5,1.5);
    \fill[gray] (0,0) circle (3.5pt);
    \fill[black] (0,1) circle (3.5pt);
    \fill[black] (-1,-2) circle (3.5pt);
    \fill[black] (1,1) circle (3.5pt);
    \draw[very thick, red] (0,1) -- (-1,-2) -- (1,1) -- cycle;
    \node[above] at (0,1) {\footnotesize $(0,1)$};
    \node[below left] at (-1,-2) {\footnotesize $(-1,-2)$};
    \node[above right] at (1,1) {\footnotesize $(1,1)$};
  \end{scope}

\end{tikzpicture}
\end{center}
\caption{Fano triangle with $\mathrm{ad}(P) = 3$ and its combinatorial mutation}
\label{figure_mutation_Fano_triangle_full_rank}
\end{figure}

\end{example}

\begin{example} Let $P$ be the $n$-dimensional standard Fano simplex defined by 
	\[
		P = \mathrm{conv} \{{\bf e}_0 := -\sum_{i=1}^n {\bf e}_i, {\bf e}_1, \dots, {\bf e}_n\}
	\]
	where ${\bf e}_i$ is the $i$-th unit coordinate vector for $i=1,\dots,n$.

\begin{figure}[htbp]
\centering
\begin{center}
\begin{tikzpicture}[scale=0.8]

  \begin{scope}[yshift=-2.5cm]

    \fill[black] (0,1) circle (3.5pt);
    \fill[black] (1,0) circle (3.5pt);
    \fill[black] (-0.5,-1) circle (3.5pt);
    \fill[black] (-0.5,0) circle (3.5pt);
	\draw[very thick, blue] (0,1) -- (1,0);
	\draw[very thick, blue] (-0.5,-1) -- (1,0);
	\draw[very thick, blue] (-0.5,-1) -- (-0.5,0);
	\draw[very thick, blue] (0,1) -- (-0.5,0);
	\draw[very thick, blue] (0,1) -- (-0.5,-1);
	\draw[dashed, very thick, blue] (1,0) -- (-0.5,0);


    \node[below right] at (1,0) {\footnotesize ${\bf e}_2$};
    \node[above right] at (0,1) {\footnotesize ${\bf e}_3$};
    \node[below right] at (-0.5,-1) {\footnotesize ${\bf e}_1$};
    \node[left] at (-0.5,0) {\footnotesize ${\bf e}_0$};

	\draw[thin, fill=blue!20, fill opacity=0.5] (0,1) -- (1,0) -- (-0.5,-1) -- cycle;
  \end{scope}

	\begin{scope}[yshift=-2.5cm]
  \draw[decorate, decoration={zigzag, segment length=4, amplitude=2}, thick, ->]
    (3.5,0) -- (5.5,0);
	\end{scope}

  \begin{scope}[xshift=8.5cm]

    \fill[black] (0,1) circle (3.5pt);
    \fill[black] (-0.5,0) circle (3.5pt);
    \fill[black] (2.5,-3) circle (3.5pt);
    \fill[black] (-2,-6) circle (3.5pt);

    \draw[very thick, red] (0,1) -- (-0.5,0);
    \draw[very thick, red] (0,1) -- (2.5,-3);
    \draw[very thick, red] (0,1) -- (-2,-6);
    \draw[very thick, red] (-0.5,0) -- (-2,-6);
    \draw[dashed, very thick, red] (-0.5,0) -- (2.5,-3);
    \draw[very thick, red] (2.5,-3) -- (-2,-6);

    \node[above right] at (2.5,-3) {\footnotesize $(-1,2,-4)$};
    \node[below right] at (-2,-6) {\footnotesize $(2,-1,-4)$};
    \node[above left] at (-0.5,0) {\footnotesize ${\bf e}_0$};
    \node[above] at (0,1) {\footnotesize ${\bf e}_3$};

	\draw[thin, fill=red!20, fill opacity=0.5] (-0.5,0) -- (2.5,-3) -- (-2,-6) -- cycle;


  \end{scope}

\end{tikzpicture}
\end{center}
\caption{Mutation of the standard 3-simplex along $\mathrm{conv}\{{\bf e}_1, {\bf e}_2, {\bf e}_3\}$}
\label{figure_mutation_3simplex}
\end{figure}

\noindent
Since $P$ is reflexive, every facet is admissible and so $\mathrm{ad}(P)=n+1$. If we take a factor $F = \mathrm{conv}\{{\bf e}_1, \dots, {\bf e}_n \}$ with the height vector ${\bf w} = (-1, \dots, -1)$ and a vertex $v = {\bf e}_n$, the combinatorial mutation of $P$ along $F$ is given by 
\[
	\mu_F(P) = \mathrm{conv}\{{\bf e}_n, {\bf e}_0 + h_{\max}G_{F,v}\}, \quad \quad 
	G_{F,v} = \frac{1}{d_F} (F - v) = F - v
\]
where $h_{\max} = \langle {\bf w}, {\bf e}_0 \rangle = n$ and $d_F = 1$ as the hyperplane equation of $F$ is $x_1 + \dots + x_n = 1$. Then the mutated simplex $\mu_F(P)$ has $n+1$ hyperplanes whose equations are 
\[
	x_1 + \dots + x_n = -n, \quad x_1 + \dots + x_n - (n+1)x_i = 1 \hs{0.2cm} (i \in [n-1]),
	\quad (n+2)(x_1 + \dots + x_{n-1}) + x_n = 1.
\]
Note that the primitive inward normal vector ${\bf u}_0'$ of $F_0'$ in \eqref{eq_new_hyperplane} is exactly 
\[
	{\bf u}_0' = 	d_n {\bf w}_0 + (d_0 + \langle {\bf w}_0, v_0 \rangle) {\bf w}_n
	= (-(n+2),\dots,-(n+2),-1)
\]
since ${\bf w}_0 = (-1,\dots,-1,n)$, ${\bf w}_n = (-1,\dots, -1)$, $d_0 = d_n = 1$, 
and $\langle {\bf w}_0, v_0 \rangle = n$.

\end{example}

\begin{remark}
	In \cite{AK2}, Akhtar and Kasprzyk proved a result analogous to Theorem \ref{thm_main}
	in the two-dimensional case, namely
	that combinatorial mutation preserves the {\em singularity content} of a Fano polygon.
	For a given Fano polygon $P$ with the spanning fan $\Sigma$ and maximal (simplicial) cones 
	$\{\sigma_i\} \subset \Sigma$, let $u_i$ and $v_i$ be the primitive ray vectors generating 
	$\sigma_i$,
	that is, the vertices of $P$ connected by an edge. We denote by $w_i$ the lattice length of 
	$\mathrm{conv}\{u_i, v_i\}$ and $\ell_i$ the affine distance 
	of $\mathrm{conv}\{u_i, v_i\}$ from the origin. Using the elementary division algorithm, we have 
	\[
		w_i = n_i \ell_i + \rho_i, \quad 0 \leq \rho_i < \ell_i.
	\]
	A {\em singularity content} of a Fano polytope $P$ consists of a pair $(n, \mcal{B})$ where 
	$n = \sum n_i$ with $w_i = n_i \ell_i + \rho_i$ 
	and $\mcal{B}$ is the collection of {\em residual data}. (See \cite{AK2} for more detail.)
	Then $\mathrm{ad}(P) = k$ if and only if exactly $k$ of the $\rho_i$ are equal to zero. Since the singularity content of $P$ 
	is preserved under combinatorial mutation by \cite{AK2}, so is $\mathrm{ad}(P)$. In geometric terms,
	the number $\mathrm{ad}(P)$ is the number of $T$-singularities in the toric Fano surface $X_P$ associated
	to $P$. See \cite[Corollary 2.6]{AK2}. 
\end{remark}

\section{Markov-type equations of Fano simplices}
\label{secMarkovtypeEquationsOfFanoSimplices}

In this section, we establish a bridge between the geometry of Fano simplices and the arithmetic of Diophantine equations. To each Fano simplex we associate a weighted Markov-type equation together with a distinguished positive integer solution. We then show that combinatorial mutations over admissible facets induce Vieta-type arithmetic mutations on the corresponding solutions. As a consequence, the geometric mutation dynamics of Fano simplices is reflected in the arithmetic mutation dynamics of the associated Diophantine data.
%

Consider the equation of the form
\begin{equation}\label{eq_GDE}
		c_0x_0^2 + c_1x_1^2 + \dots + c_nx_n^2 = qx_0x_1 \dots x_n
\end{equation}
for some real numbers $c_0, c_1, \dots, c_n$, and $q$. 
Analogously to the classical Vieta involutions for the Markov equation, positive solutions of the above weighted Markov-type equation generate new solutions through the following Vieta-type mutation.
The proof is a straightforward computation and is therefore omitted.

\begin{lemma}\label{lemma_Vieta}
Suppose that 
${\bf a}:= (a_0, a_1, \dots, a_n) \in (\R_{>0})^{n+1}$ is a solution of the equation \eqref{eq_GDE}. 
For each $i=0, \dots, n$, there exists a positive solution 
$\mu_i({\bf a}) = (a_0', a_1', \dots, a_n') \in (\R_{>0})^{n+1}$ where 
\[
	a_j' = 
	\begin{cases}
		a_j & \text{if}~ j \neq i \\
		\displaystyle \frac{c_0a_0^2 + \dots + \widehat{c_i a_i^2} + \dots + c_na_n^2}{c_ia_i} & \text{if} ~j=i	
	\end{cases}
\]
Furthermore,  $\mu_i$ is an involution, i.e., $\mu_i^2({\bf a}) = {\bf a}$. 
\end{lemma}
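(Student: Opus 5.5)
The plan is to view the equation \eqref{eq_GDE}, with all coordinates except $x_i$ frozen at the values $a_j$ ($j\neq i$), as a quadratic in the single variable $x_i$, and then apply Vieta's formulas. We assume, as in the intended application, that all $c_j>0$; this is needed for positivity.

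First we set $S_i := \sum_{j\neq i} c_j a_j^2$ and $\Pi_i := \prod_{j\neq i} a_j$. The equation restricted to the $i$-th coordinate becomes
\[
	c_i x^2 - q\Pi_i x + S_i = 0.
\]
By hypothesis, $x=a_i$ is a root. Since $c_i\neq 0$, Vieta's formulas show that the other root $a_i'$ satisfies
\[
	a_i + a_i' = \frac{q\Pi_i}{c_i}, \qquad a_i a_i' = \frac{S_i}{c_i}.
\]
The second relation gives $a_i' = S_i/(c_i a_i)$, which is exactly the formula in the statement. Because $a_i'$ is a root of the same quadratic, the tuple $\mu_i({\bf a})$ (equal to ${\bf a}$ except in slot $i$) again satisfies \eqref{eq_GDE}. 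One can also check this directly by substitution: $c_i a_i'^2 + S_i = a_i'(c_i a_i' + c_i a_i) = a_i' q\Pi_i$.

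Next comes positivity. All $a_j>0$ and $c_j>0$, so $S_i>0$, $c_ia_i>0$, and hence $a_i'>0$. The remaining coordinates are unchanged, so they stay positive.

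Finally we check that $\mu_i$ is an involution. Applying $\mu_i$ to $\mu_i({\bf a})$ leaves the coordinates $j\neq i$, and therefore $S_i$, unchanged. The new $i$-th coordinate is $S_i/(c_i a_i') = S_i/(S_i/a_i) = a_i$. Equivalently, the two roots of the fixed quadratic are simply swapped.

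There is no real obstacle here. The only point needing care is the positivity hypothesis on the coefficients $c_j$: it guarantees both $c_i\neq0$ and $S_i>0$. For arbitrary real coefficients the positivity claim can fail, so the proof will make this assumption explicit.
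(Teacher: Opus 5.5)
Your Vieta argument is correct and is exactly the ``straightforward computation'' the paper omits: with $x_j=a_j$ frozen for $j\neq i$ the equation becomes the quadratic $c_ix^2-q\Pi_i x+S_i=0$, whose second root is $S_i/(c_ia_i)$, and $\mu_i$ swaps the two roots. Your added hypothesis $c_j>0$ is genuinely needed, since for arbitrary real coefficients the statement fails: $(1,1,1)$ solves $x_0^2+x_1^2-x_2^2=x_0x_1x_2$, but $\mu_2$ gives $a_2'=-2$. The hypothesis does hold in the paper's application, where the $c_i$ are positive square-free integers.
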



Motivated by the work of Akhtar-Kasprzyk \cite{AK1}, we will assign a Diophantine equation for each Fano simplex as follows. Let $P \subset N_\R$ be a Fano simplex of dimension $n$ with facets $F_0, \dots, F_n$ and $\Sigma_P$ the spanning fan of $P$. Denote by $\sigma_i$ the maximal cone in $\Sigma_P$ associated to $F_i$ and let $\widetilde{\lambda}_i$ be the index of the sub-lattice of $N$ spanned by the primitive ray generators of $\sigma_i$. Also we define $d := \mathrm{gcd}(\widetilde{\lambda}_0, \dots, \widetilde{\lambda}_n)$ called the {\em multiplicity} of $P$. Equivalently, $d$ equals to the cardinality of the quotient group $N/N'$ where $N'$ is the sublattice of $N$ generated by the vertices of $P$.
We also denote by 
\[
	\displaystyle \lambda:= (\lambda_0, \dots, \lambda_n) := \frac{1}{d}(\widetilde{\lambda}_0, \dots, 
	\widetilde{\lambda}_n)
\]	
called the {\em weight} of $P$. It is straightforward by definition that a weight is integral and primitive. 
Moreover, it follows directly from the definition of a Fano simplex that the associated weights are {\em well-formed} (cf.~\cite[p.~3, first paragraph]{CGKN}); that is,
\[
\gcd(\lambda_0,\dots,\widehat{\lambda_i},\dots,\lambda_n)=1
\quad \text{for all } i=0,\dots,n.
\]

Now we consider the factorization $\widetilde{\lambda}_i = dc_ia_i^2$ where $c_i, a_i \in \Z_{>0}$ and $c_i$ is square-free. 
Then it is obvious that ${\bf a}_P := (a_0, a_1, \dots, a_n)$ is a positive integral solution of the Diophantine equation
\begin{equation}\label{eq_DE_for_Fano}
	k(c_0x_0^2 + c_1x_1^2 + \dots + c_nx_n^2) = mx_0x_1\dots x_n
\end{equation}
where $k$ and $m$ are positive integers such that 
\[
	\displaystyle \frac{m}{k} = \frac{c_0a_0^2 + c_1a_1^2 + \dots + c_na_n^2}{a_0 a_1\dots a_n} \in \Q, \quad (k,m) = 1. 
\]
We call the equation \eqref{eq_DE_for_Fano}
{\em the weighted Markov-type equation for $P$} and denote it by $\mathrm{DE}_P$.



\begin{proposition}[{\cite[Theorem 12 and Remark 14]{CGKN}}] \label{prop_CGKN}
Let $P \subset N_\R$ be a Fano simplex with weight
\[
	\lambda=(\lambda_0,\dots,\lambda_n)\in (\Z_{>0})^{n+1}
\]
and multiplicity $d>0$. Let $Q$ be a combinatorial mutation of $P$ over the facet $F_i$. Then the weight $\lambda' \in (\Z_{>0})^{n+1}$ and the multiplicity $d' > 0$ are given by 
\[
	\lambda'=(\lambda_0,\dots,\lambda_i',\dots,\lambda_n), \quad 
	\lambda_i' = \frac{\bigl(\lambda_0+\cdots+\widehat{\lambda_i}+\cdots+\lambda_n\bigr)^2} {\lambda_i}
\]
and 
\[
	d' = d \cdot
\left(
\frac{\lambda_0+\cdots+\widehat{\lambda_i}+\cdots+\lambda_n}
{\lambda_i}
\right)^{n-2}.
\]
\end{proposition}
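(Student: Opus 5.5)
The plan is to work directly with the vertex description of $\mu_F(P)$ from the proof of Theorem~\ref{thm_main} and a determinant computation. After relabelling, take $F=F_n$, so $i=n$. The new vertices are $v_0'=v_n$, $v_n'=v_0$ and $v_j'=v_n+t(v_j-v_0)$ for $1\le j\le n-1$, where $t:=h_{\max}/d_n$.

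The first step rests on the observation that the indices $\widetilde{\lambda}_j=|\det(v_0,\dots,\widehat{v_j},\dots,v_n)|$ are the coefficients of the unique (up to scale) linear relation among the vertices. By Cramer's rule, and since $\mathbf 0$ is interior, all coefficients are positive, so
\[
\sum_{j=0}^n \widetilde{\lambda}_j v_j=\mathbf 0,\qquad\text{and hence}\qquad \sum_{j=0}^n \lambda_j v_j=\mathbf 0 .
\]
Pairing this relation with ${\bf w}_n$ gives $-d_n(\lambda_0+\dots+\lambda_{n-1})+\lambda_n h_{\max}=0$. Writing $S:=\lambda_0+\dots+\lambda_{n-1}$, this means
\[
t=\frac{h_{\max}}{d_n}=\frac{S}{\lambda_n}.
\]

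Next I find the linear relation among the $v_k'$. Give $v_j'$ ($1\le j\le n-1$) the coefficient $\lambda_j$, and give $v_0'$ and $v_n'$ unknown coefficients $\alpha$ and $\beta$. To expand, substitute $\sum_{j=1}^{n-1}\lambda_j v_j=-\lambda_0v_0-\lambda_nv_n$. The coefficient of $v_n$ becomes $\alpha+(S-\lambda_0)-t\lambda_n=\alpha-\lambda_0$, and the coefficient of $v_0$ becomes $\beta-tS$. Hence
\[
\lambda_0 v_0'+\sum_{j=1}^{n-1}\lambda_j v_j'+\frac{S^2}{\lambda_n}\,v_n'=\mathbf 0 .
\]
Since $v_0'$ and $v_n'$ are just $v_n$ and $v_0$, this is exactly the stated vector $\lambda'$, with the entry at the mutated facet replaced by $S^2/\lambda_n$. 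By uniqueness of the relation, $(\widetilde{\lambda}'_k)$ is proportional to this vector. To identify it as the weight I must check that it is integral and primitive. Primitivity follows from well-formedness: $\gcd(\lambda_0,\dots,\lambda_{n-1})=1$, and those entries are unchanged. Integrality of $S^2/\lambda_n$ should follow once I know the proportionality constant, since each $\widetilde{\lambda}'_k$ is an integer; I expect this to come out of the next step, and otherwise it can be read off from integrality of the mutated simplex.

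The step I expect to take the most care is fixing the scale, which yields $d'$. I will compute one index explicitly, namely the cone opposite $v_n'$, which is generated by $v_0',\dots,v_{n-1}'$. By multilinearity,
\[
\det\bigl(v_n,\;v_n+t(v_j-v_0)\bigr)_{j=1}^{n-1}=t^{\,n-1}\det(v_n,v_1-v_0,\dots,v_{n-1}-v_0).
\]
Expanding the last determinant in $v_0$ gives a signed sum of old indices, namely $\det(v_n,v_1,\dots,v_{n-1})$ together with the terms with $v_0$ in place of some $v_j$. Using the linear relation, each of these equals $\pm d\lambda_k$, and their signed sum collapses to $\pm d(\lambda_0+\dots+\lambda_{n-1})=\pm dS$. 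Therefore
\[
\widetilde{\lambda}'_n=t^{\,n-1}dS=d\,t^{\,n-2}\,\frac{S^2}{\lambda_n}.
\]
Comparing with $\widetilde{\lambda}'_n=d'\lambda'_n=d'\,S^2/\lambda_n$ gives $d'=d\,(S/\lambda_n)^{n-2}$, as claimed. Because $\widetilde{\lambda}'=d\,t^{n-2}\lambda'$ and $\lambda'$ is primitive, the gcd of the $\widetilde{\lambda}'_k$ is indeed $d\,t^{n-2}$. This is consistent with the definition of $d'$ as that gcd, and it justifies the integrality claims above.

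The main bookkeeping risks are the signs in the cofactor expansion and the relabelling $v_0'\leftrightarrow v_n$, $v_n'\leftrightarrow v_0$. I would check both against the standard $2$-simplex example, where $\lambda=(1,1,1)$ should go to $(1,1,4)$ and $d$ should stay equal to $1$.
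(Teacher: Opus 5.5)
Your argument is correct in substance, but it does not follow the paper's route. The paper does not prove this proposition: the weight formula is quoted from \cite{CGKN}. The only derivation the paper offers itself is for $d'$, in Section~\ref{secDualSimplicesAndSlidingOperators}, and it works on the dual side. The map $\varphi$ of Proposition~\ref{prop_ACGK} is piecewise in $\mathrm{SL}_n(\mathbb Z)$, so $\Vol(P^\vee)=\Vol(Q^\vee)$. Inserting the known $\lambda'$ into $\Vol(P^\vee)=(\sum_j\lambda_j)^n/(n!\,\mult(P)\prod_j\lambda_j)$ from Proposition~\ref{prop:dual-volume-weight-multiplicity} then forces $d'=d\,(S/\lambda_n)^{n-2}$.

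You instead stay in $N$ and use the explicit vertices of $\mu_F(P)$. The linear relation among the $v_k'$ produces $\lambda'$, and one explicit index fixes the scale and hence $d'$. Your route is elementary and self-contained, and it gives both formulas (and the divisibility $\lambda_n\mid S^2$) at once. The paper's route explains the multiplicity change through an invariant preserved by every combinatorial mutation, namely the dual volume, but it needs the weight formula as input. Your sign bookkeeping does work out. A sign-free shortcut is to substitute $\lambda_nv_n=-Sv_0-\sum_{j=1}^{n-1}\lambda_j(v_j-v_0)$ into the first column, which gives $|\det(v_n,v_1-v_0,\dots,v_{n-1}-v_0)|=S\,|\det(v_0,\dots,v_{n-1})|/\lambda_n=dS$.

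The one step that fails as written is the integrality of $S^2/\lambda_n$. Your closing justification is circular: saying that $\widetilde\lambda'=d\,t^{n-2}\lambda'$ with $\lambda'$ primitive already presupposes that $\mu:=(\lambda_0,\dots,\lambda_{n-1},S^2/\lambda_n)$ is an integral vector. Integrality of the indices $\widetilde\lambda'_k$ alone does not give it either. It yields $c:=d\,t^{n-2}\in\mathbb Z$ (because $\gcd(\lambda_0,\dots,\lambda_{n-1})=1$) and $c\,S^2/\lambda_n\in\mathbb Z$. A denominator of $S^2/\lambda_n$ could a priori be absorbed by $c$, and then $\gcd(\widetilde\lambda'_0,\dots,\widetilde\lambda'_n)$ would be strictly smaller than $c$.

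The repair is one line, and it is exactly where admissibility and the Fano condition enter. Each $v_j'=v_n+h_{\max}\cdot\tfrac{1}{d_n}(v_j-v_0)$ lies in $N$ because $F$ is admissible. Your relation then gives $\tfrac{S^2}{\lambda_n}\,v_0=-\lambda_0v_n-\sum_{j=1}^{n-1}\lambda_jv_j'\in N$, and primitivity of $v_0$ forces $S^2/\lambda_n\in\mathbb Z$. Alternatively, apply well-formedness of the weight of $Q$ at its primitive vertex $v_n'=v_0$. With this in hand, $\mu$ is integral and primitive. Then $\widetilde\lambda'=c\,\mu$ gives $d'=\gcd(\widetilde\lambda'_k)=c$ and $\lambda'=\mu$, and the rest of your argument goes through.
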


Using Proposition \ref{prop_CGKN}, we can prove the following. 

\begin{theorem}\label{thm_DE_inv}
	Let $P$ be a Fano simplex.
	If $Q$ is a combinatorial mutation of $P$ over an admissible facet, then 
	\[
			\mathrm{DE}_P = \mathrm{DE}_Q.
	\]
	In particular, $\mathrm{DE}_P$ depends only on the facet-mutation class of P.
\end{theorem}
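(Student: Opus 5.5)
The plan is to compute the new data $(c_j', a_j')$ for $Q$ explicitly from Proposition~\ref{prop_CGKN} and to check that the rescaled equation coincides with $\mathrm{DE}_P$. Assume $Q=\mu_{F_i}(P)$ for an admissible facet $F_i$, and write
\[
S:=\lambda_0+\cdots+\widehat{\lambda_i}+\cdots+\lambda_n, \qquad r:=S/\lambda_i .
\]
Proposition~\ref{prop_CGKN} gives $\lambda_j'=\lambda_j$ for $j\neq i$, $\lambda_i'=S^2/\lambda_i=r^2\lambda_i$, and $d'=d\,r^{n-2}$. Hence
\[
\widetilde{\lambda}_j'=r^{n-2}\,\widetilde{\lambda}_j \quad (j\neq i), \qquad \widetilde{\lambda}_i'=r^{n}\,\widetilde{\lambda}_i .
\]

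I would cross-check this geometrically using the proof of Theorem~\ref{thm_main}. Pairing the relation $\sum_j\lambda_jv_j=0$ with ${\bf w}_i$ gives $\lambda_i h_{\max}=d_{F_i}S$, so $r=h_{\max}/d_{F_i}$. Moreover $\widetilde{\lambda}_j$ equals $d_j$ times the normalized volume of $F_j$. By the Remark after Theorem~\ref{thm_main}, $d_j'=d_j$ for $j\ne i$, while exactly $n-2$ edge vectors of $F_j$ get scaled by $r$. For $F_i$ we have $F_i=v_0+d_{F_i}G$ and $F_i'=v_0'+h_{\max}G$. Both computations reproduce the displayed formulas, and this is where admissibility is used: it makes $r$ the honest scaling factor of $G$.

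Next, write $r=p/q$ in lowest terms and let $s$ denote the square-free part of $r^{n-2}$ (as a rational number). Since all $\widetilde{\lambda}'_j$ are integers, the square-free parts satisfy $c_j'=\mathrm{sf}(s\,c_j)$ for every $j$, including $j=i$ because $r^2$ is a square. Thus all $c_j$ are multiplied by a common square-free factor, up to a common square-class correction. Simultaneously, $a_j'=t\,a_j$ for $j\ne i$ with a common rational $t$, and $a_i'=t\,r\,a_i$. The relation $r a_i\cdot c_ia_i=\lambda_i\cdot S/\lambda_i\cdot(\dots)$ shows that $r a_i$ is exactly the Vieta value $\bigl(\sum_{j\neq i}c_ja_j^2\bigr)/(c_ia_i)$, because $S=\sum_{j\ne i}c_ja_j^2$ after dividing by the common factor. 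So $({\bf a}_Q)=t\cdot\mu_i({\bf a}_P)$ with $c$ rescaled uniformly.

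Finally, I would compute
\[
\frac{m'}{k'}=\frac{\sum_j c_j'a_j'^2}{\prod_j a_j'}
\]
and use Lemma~\ref{lemma_Vieta} to see that $\mu_i({\bf a}_P)$ satisfies $\mathrm{DE}_P$. The uniform factors from $c$ and from $t^{2}$ versus $t^{n+1}$ then cancel against each other precisely because $s\,t^{2}=t^{n+1}\cdot(\text{unit})$, which is the content of $d'=d\,r^{n-2}$. The main obstacle is this bookkeeping. When $n$ is odd and $r$ is not a square, the individual $c_j$ genuinely change by a common factor. For example, for the standard $3$-simplex $c$ goes from $(1,1,1,1)$ to $(3,3,3,3)$. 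So equality $\mathrm{DE}_P=\mathrm{DE}_Q$ must be read with the equation normalized by dividing out the gcd of all coefficients $kc_0,\dots,kc_n,m$. I would first make that normalization explicit and then verify that the cancellation is exact in that normalized form. The ``in particular'' statement then follows by induction along chains of admissible facet mutations, using Theorem~\ref{thm_main} to guarantee that admissible facets persist along the chain.
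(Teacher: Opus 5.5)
Your argument rests on a misreading of how $\mathrm{DE}_P$ is defined. The paper factors $\widetilde\lambda_j=d\,c_ja_j^2$. So $c_ja_j^2=\lambda_j$ is the square-free decomposition of the primitive weight, not of the index $\widetilde\lambda_j$. You instead take square-free parts of $\widetilde\lambda'_j=r^{n-2}\widetilde\lambda_j$.

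With the correct reading, the factor $r^{n-2}$ is exactly absorbed by $d'=d\,r^{n-2}$. Proposition~\ref{prop_CGKN} gives $\lambda'_j=\lambda_j$ for $j\neq i$ and $\lambda'_i=r^2\lambda_i=c_i(ra_i)^2$, where, as you correctly note, $ra_i=S/(c_ia_i)$ is the Vieta value. Since $c_i(ra_i)^2\in\Z$ and $c_i$ is square-free, $ra_i\in\Z$. Hence $c'=c$ exactly and ${\bf a}_Q=\mu_i({\bf a}_P)$. Lemma~\ref{lemma_Vieta} then gives $m'/k'=m/k$, so $\mathrm{DE}_Q=\mathrm{DE}_P$ literally, with no normalization.

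That is the paper's whole proof. The multiplicity formula matters only for passing from $\widetilde\lambda$ to $\lambda$, and the geometric cross-check is not needed. Your example is an artifact of the misreading: for the standard $3$-simplex, $Q$ has $\lambda'=(1,1,1,9)$ and $d'=3$. So $c'=(1,1,1,1)$, ${\bf a}_Q=(1,1,1,3)$, and both equations are $x_0^2+x_1^2+x_2^2+x_3^2=4x_0x_1x_2x_3$.

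Moreover, the bookkeeping you defer cannot be completed under your reading. The claim that all $c_j$ change by a common factor up to a common square class is false: $\mathrm{sf}(s\,c_j)=s\,c_j/\gcd(s,c_j)^2$, and this correction depends on $j$, as does your scalar $t$. For instance, take $P=\mathrm{conv}(-{\bf e}_1-2{\bf e}_2-2{\bf e}_3,{\bf e}_1,{\bf e}_2,{\bf e}_3)$, which has $\lambda=(1,1,2,2)$ and $d=1$. The facet opposite ${\bf e}_2$ lies on $x_1-2x_2+x_3=1$, so it has affine distance $1$ and is admissible. Mutating over it gives $\lambda'=(1,1,8,2)$ and $d'=2$, hence $\widetilde\lambda'=(2,2,16,4)$. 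Its square-free parts are $(2,2,1,1)$, which is not proportional to $(1,1,2,2)$. So if $c_j$ were extracted from $\widetilde\lambda_j$, the equation would genuinely change and no gcd normalization would rescue it. The invariance holds precisely because $c_j$ and $a_j$ are taken from the weights $\lambda_j$.
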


\begin{proof}
	
	Let $\mathrm{DE}_P$ be the Markov-type equation for $P$ given by 
	\[
		k(c_0x_0^2 + c_1x_1^2 + \dots + c_nx_n^2) = mx_0x_1\dots x_n
	\]
	where $\lambda_i = c_ia_i^2$ with $c_i, a_i \in \Z_{>0}$ and $c_i$ is square-free for every $i=0, \dots, n$. By Proposition \ref{prop_CGKN}, the weights of $Q$ are given by 
   	\[
   		\lambda_j' = \begin{cases}
   			\lambda_j & \text{if $j \neq i$,} \\
   			\displaystyle \frac{(\lambda_0 + \dots + \hat{\lambda}_i + \dots + \lambda_n)^2}{\lambda_i} & \text{if $j = i$}.
   		\end{cases}
   	\]
	By direct calculation, we have 
	\[
		\begin{array}{ccccl}\vs{0.2cm}
			\lambda_i' & = &  \displaystyle \frac{(c_0a_0^2 + \dots + \hat{c_ia_i^2} + \dots + c_na_n^2)^2}{c_ia_i^2} 
			& = & c_i \displaystyle \left(\frac{c_0a_0^2 + \dots + \hat{c_ia_i^2} + \dots + c_na_n^2}{c_ia_i} \right)^2.
			\\
		\end{array}
	\]
	Therefore it suffices to show that 
	$(a_1, \dots, a_{i-1}, a_i', a_{i+1}, \dots, a_n)$ is an integral solution to $\mathrm{DE}_P$ where 
	\[
		a_i' := \frac{c_0a_0^2 + \dots + \hat{c_ia_i^2} + \dots + c_na_n^2}{c_ia_i} \in \Q
	\]	
	and is equal to ${\bf a}_Q$. 
	Lemma \ref{lemma_Vieta} implies that ${\mu}_i({\bf a}) = (a_1, \dots, a_{i-1}, a_i', a_{i+1}, \dots, a_n)$ and hence is a solution 
	to $\mathrm{DE}_P$. Also, the integrality of $a_i'$ follows from the fact that 
	$\lambda_i'$ is integral and $c_i$ is square-free. 
	Thus we have ${\bf a}_Q = (a_1, \dots, a_{i-1}, a_i', a_{i+1}, \dots, a_n)$ and this 
	completes the proof.
	 
\end{proof}

Theorem~\ref{thm_DE_inv} establishes a natural assignment
\[
\{ \text{Fano simplices} \} / \sim \;\xrightarrow{\;\phi\;}\; \{ \text{Markov-type equations} \}
\]
where $``\sim"$ denotes unimodular equivalence. 
More precisely, to each Fano polytope $P$, one can associate a weighted Markov-type equation $\mathrm{DE}_P$ together with an integer solution ${\bf a}_P \in \Z^{n+1}$. We emphasize that this correspondence is not bijective. For instance, consider a Fano triangle $P$ associated with the fake weighted projective plane $\p(5,5,5)$ with multiplicity $d=5$. In this case, $\mathrm{DE}_P$ coincides with the Markov equation
\begin{equation}\label{eq_Markov}
x^2 + y^2 + z^2 = 3xyz,
\end{equation}
which is also the equation corresponding to $\p(1,1,1) = \C P^2$. Indeed, one can easily check that two Fano simplices with the same weights (but possibly different multiplicities) give rise to the same Diophantine equation.
On the other hand, the correspondence between polytopes and solutions behaves differently. There are infinitely many positive integer solutions (called Markov triples) to \eqref{eq_Markov} as we have seen in Section \ref{secIntroduction}.
In fact, there is a one-to-one correspondence between the set of such solutions and the set of mutation equivalence classes of Fano triangles associated with $\p(1,1,1)$.
In contrast, there are only two facet-mutation equivalence classes of Fano triangles associated with $P$, namely those corresponding to $\p(5,5,5)$ and $\p(5,5,20)$.
See Figure \ref{figure_mutation_triangle_example}.
\begin{figure}[htbp]
\centering
\begin{tikzpicture}[scale=0.5]

  \begin{scope}
    \draw[step=1cm, gray!50, thin] (-6.5,-3.5) grid (6.5,2.5);

    \fill[black] (-5,1) circle (3.5pt);
    \fill[black] (0,1) circle (3.5pt);
    \fill[black] (5,-2) circle (3.5pt);
    \fill[red] (0,0) circle (3.5pt);

    \draw[very thick, blue] (-5,1) -- (0,1) -- (5,-2) -- cycle;

    \node[above left] at (-5,1) {\footnotesize $(-5,1)$};
    \node[above] at (0,1) {\footnotesize $(0,1)$};
    \node[below right] at (5,-2) {\footnotesize $(5,-2)$};
  \end{scope}

  \draw[decorate, decoration={zigzag, segment length=4, amplitude=2}, thick, ->]
    (8,0) -- (10,0);

  \begin{scope}[xshift=18cm]
    \draw[step=1cm, gray!50, thin] (-6.5,-3.5) grid (6.5,2.5);

    \fill[black] (0,1) circle (3.5pt);
    \fill[black] (-5,-2) circle (3.5pt);
    \fill[black] (5,-2) circle (3.5pt);
    \fill[red] (0,0) circle (3.5pt);

    \draw[very thick, red] (0,1) -- (-5,-2) -- (5,-2) -- cycle;

    \node[above] at (0,1) {\footnotesize $(0,1)$};
    \node[below left] at (-5,-2) {\footnotesize $(-5,-2)$};
    \node[below right] at (5,-2) {\footnotesize $(5,-2)$};
  \end{scope}

\end{tikzpicture}
\caption{A Fano triangle for $\p(5,5,5)$ and its combinatorial mutation over a facet}
\label{figure_mutation_triangle_example}
\end{figure}

%

\begin{corollary}
	Let $P$ be a reflexive simplex and $P^\vee$ the polar dual of $P$. Then 
	\[
		\mathrm{DE}_P = \mathrm{DE}_{P^\vee}.
	\]
\end{corollary}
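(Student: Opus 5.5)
The plan is to reduce the statement to a comparison of weights. By construction, $\mathrm{DE}_P$ depends only on the weight $\lambda=(\lambda_0,\dots,\lambda_n)$ of $P$ and not on the multiplicity $d$. Indeed, writing $\lambda_i=c_ia_i^2$ with $c_i$ square-free determines the $c_i$ and $a_i$. The coprime pair $(k,m)$ is then fixed by $m/k=\sum c_ia_i^2/\prod a_i$. This is the remark made just before Figure \ref{figure_mutation_triangle_example}: two Fano simplices with the same weights give the same equation. It therefore suffices to show that $P$ and $P^\vee$ have the same weight.

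\textbf{Setup.} First I will check that $P^\vee$ is again a Fano simplex with an explicit vertex set. Since $P$ is reflexive, every facet $F_i$ has $d_i=1$. The polar dual is
\[
P^\vee=\{u\in M_\R \mid \langle u,x\rangle\ge -1 \ \forall x\in P\},
\]
with the sign convention of the paper. This is a lattice simplex whose vertices are the primitive inward normals ${\bf w}_0,\dots,{\bf w}_n$. They are characterized by $\langle {\bf w}_i, v_j\rangle=-1$ for $j\neq i$. Reflexivity of $P$ gives reflexivity of $P^\vee$, so its vertices are primitive and it is Fano. I would also record that the weight of a simplex is, up to positive scaling, the unique positive linear relation among its vertices:
\[
\sum_i \lambda_i v_i=0,
\]
with $\lambda$ primitive. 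This holds because $\widetilde\lambda_i$ is the index of the sublattice spanned by the other vertices, i.e.\ the normalized volume of the cone on $F_i$, and Cramer's rule applies.

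\textbf{Key computation.} Set $h_j:=\langle {\bf w}_j,v_j\rangle$. Pair the relation $\sum_i\lambda_iv_i=0$ with ${\bf w}_j$. This gives $\lambda_jh_j-\sum_{i\neq j}\lambda_i=0$, so
\[
h_j=\frac{|\lambda|}{\lambda_j}-1,\qquad |\lambda|:=\sum_i\lambda_i .
\]
Next let $\mu$ be the weight of $P^\vee$, so that $\sum_i\mu_i{\bf w}_i=0$. Pair this with $v_j$. This gives $\mu_jh_j=\sum_{i\neq j}\mu_i$, i.e.\ $\mu_j(h_j+1)=|\mu|$. Substituting $h_j+1=|\lambda|/\lambda_j$ yields
\[
\mu_j=\frac{|\mu|}{|\lambda|}\,\lambda_j \quad\text{for every } j .
\]
Hence $\mu$ is proportional to $\lambda$. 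Both weights are primitive positive integer vectors, so $\mu=\lambda$, and therefore $\mathrm{DE}_{P^\vee}=\mathrm{DE}_P$.

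\textbf{Main obstacle.} The step needing the most care is the identification of the weight with the primitive positive relation among the vertices, including the normalization $\widetilde\lambda_i/d$. The factor $d$, and hence the multiplicity, may differ between $P$ and $P^\vee$, but it is divided out in the definition of $\lambda$ and never enters. I would verify the Cramer's rule identity
\[
\sum_i \det(v_0,\dots,\widehat{v_i},\dots,v_n)\,(\pm1)\,v_i=0,
\]
and that all coefficients have the same sign because ${\bf 0}$ lies in the interior of $P$. The rest is the two-line pairing argument above.
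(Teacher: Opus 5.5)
Your proposal is correct, but it takes a different route from the paper at the key step. The paper makes the same reduction you do: $\mathrm{DE}_P$ depends only on the weight $\lambda$, not on the multiplicity. For the remaining step, that a reflexive simplex and its polar dual have the same weights, the paper simply cites Conrad's lemma. You instead prove that fact directly.

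Your argument pairs the vertex relation $\sum_i\lambda_iv_i=0$ with the dual vertices ${\bf w}_j$, which gives $\langle {\bf w}_j,v_j\rangle+1=|\lambda|/\lambda_j$. Pairing the relation $\sum_i\mu_i{\bf w}_i=0$ with $v_j$ then gives $\mu_j(\langle {\bf w}_j,v_j\rangle+1)=|\mu|$. This forces $\mu\propto\lambda$, and primitivity gives $\mu=\lambda$. This is the same pairing trick the paper itself uses later, in the proof of Proposition~\ref{prop:dual-volume-weight-multiplicity}, to get $\langle u_i,v_i\rangle=h/\delta_i-1$. So your route fits the paper's own toolkit.

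The paper's version is a one-line appeal to an external result. Yours is self-contained and shows exactly where reflexivity enters. The proportionality of the two vertex relations holds for the polar dual of any simplex with the origin in its interior. Reflexivity is needed only so that $P^\vee$ is a Fano lattice simplex, which makes its weight defined and equal to the primitive positive integral relation among its vertices. Your identification of the weight $\widetilde\lambda/d$ with that primitive relation (Cramer's rule plus the one-dimensionality of the relation space) is also right. The paper uses the same identification in its Section~\ref{secDualSimplicesAndSlidingOperators}.
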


\begin{proof}
	It was proved by Conrad \cite[Lemma 5.3]{Con} that a reflexive simplex $P$ and its polar
	dual $P^\vee$ have the same weights, but possibly different multiplicities. 
	Thus, the result follows immediately.
\end{proof}

\begin{example}
	In dimension two, it is known that there are 5 reflexive simplices whose weights and the multiplicities are listed as 
	\[
		\ell_1 = \{ (1,1,1), d = 1 \}, \quad 
		\ell_2 = \{ (1,1,1), d = 3 \}, \quad
		\ell_3 = \{ (1,1,2), d = 1 \}, \quad
		\ell_4 = \{ (1,1,2), d = 2 \}, 
	\]
	and $\ell_5 = \{ (1,2,3), d = 1 \}$. 
	Here, $\ell_1$ is dual to $\ell_2$, $\ell_3$ is dual to $\ell_4$, and $\ell_5$ is self-dual.
	One can see that the associated Markov-type equation is given by 
	\[
		\ell_1, \ell_2: x^2 + y^2 + z^2 = 3xyz \quad \quad
	 	\ell_3, \ell_4: x^2 + y^2 + 2z^2 = 4xyz \quad \quad 
		\ell_5: x^2 + 2y^2 + 3z^2 = 6xyz.
	\]
\end{example}

\begin{remark}
	The phenomenon described in Theorem~\ref{thm_DE_inv} is specific to admissible-facet mutations.
	In contrast, for general combinatorial mutations of Fano simplices,
	one does not expect the existence of a mutation-invariant
	Diophantine equation. For instance $\p(1,1,1,3)$ and $\p(1,1,4,6)$ are mutation equivalent while the corresponding Markov-type equations 
	are $x^2 + y^2 + z^2 + 3w^2 = 6xyzw$ and $x^2 + y^2 + z^2 + 6w^2 = 6xyzw$, respectively.
	Indeed, Coates-Gonshaw-Kasprzyk-Nabijou
	observed that several arithmetic quantities associated with the
	weights fail to be preserved under mutation in higher dimensions \cite[Remark~17]{CGKN}.
	Theorem~\ref{thm_DE_inv} shows that admissible-facet mutations form
	a distinguished subclass for which a weighted Markov-type equation
	is preserved.
\end{remark}

\section{Dual simplices and Sliding operators}
\label{secDualSimplicesAndSlidingOperators}

We have seen in Section~\ref{secMarkovtypeEquationsOfFanoSimplices} that each Fano simplex $P$ gives rise to a Markov-type Diophantine equation $\mathrm{DE}_P$ together with a distinguished positive integer solution ${\bf a}_P \in \Z^{n+1}$. Moreover, the associated arithmetic mutations reflect certain structural features of the admissible-facet mutation class of $P$.
In this section, we develop a dual-polytope interpretation of combinatorial mutation. More precisely, we introduce an invertible piecewise linear transformation on $M_\R$, called a \emph{sliding operator}, and show that combinatorial mutation over an admissible facet is realized by this transformation in the dual picture. As a consequence, the sliding operator induces a bijection between the dual polytopes $P^\vee$ and $Q^\vee$, and in particular preserves their volumes. 
In contrast to approaches in which this formula is interpreted via anticanonical degrees or related toric intersection-theoretic considerations ~\cite{Ba,Ka,Ni}, our treatment is entirely polyhedral. This viewpoint provides a direct geometric explanation of volume preservation under mutation and yields an alternative derivation of the multiplicity change formula in Proposition~\ref{prop_CGKN}. We also obtain a volume formula for $P^\vee$ in terms of the coefficients of $\mathrm{DE}_P$ and the multiplicity of $P$.

We begin by defining a sliding operator on $M_\R$. Informally, a sliding operator moves each line segment in $\Delta$ parallel to ${\bf w}$ toward the facet $D$ while preserving its length. The following definition makes this precise; see also Figure~\ref{figure_sliding}.

\begin{definition}\label{def_sliding}
	For a given convex polytope $\Delta \subset M_\R$ containing the origin in its interior, let ${\bf w} \in \mathrm{vert}(\Delta)$ and $D$ a facet of $\Delta$ containing ${\bf w}$. 
	Denote by $H_D$ the hyperplane of $M_\R$ containing $D$ and $H_D^+$ the half-space supported by $H_D$ and containing $\Delta$. 
	A {\em sliding of $\Delta$ along $({\bf w},D)$} is the convex polytope $\mu_{{\bf w},D}(\Delta) \subset H_D^+$ such that 
	\begin{itemize}
		\item for any line $\ell$ in direction ${\bf w}$, it satisfies
		\[
			|\ell \cap \Delta| = |\ell \cap \mu_{{\bf w},D}(\Delta)|
		\]
		where $|\cdot|$ denotes the length, and  
		\item $\ell \cap \mu_{{\bf w},D}(\Delta) \cap H_D$ consists of a single point whenever $\ell\cap\Delta\neq\emptyset$.
	\end{itemize}
\end{definition}

In fact, this definition is not entirely new. Pabiniak and Tolman \cite[Definition 2.5]{PT} introduced an operation that transforms a lattice polytope into another lattice polytope in order to study toric degenerations of smooth projective varieties. 
We will not describe it in detail here, but in fact Pabiniak--Tolman’s operation coincides with our sliding operator in a specific situation, namely when $\Delta$ and $\mu_{{\bf w}, D}(\Delta)$ are both integral simplices. This situation rarely arises in our setting since the dual of a Fano simplex is not integral in general. 

We now show that the sliding operator introduced above realizes combinatorial mutation in the dual picture. More precisely, we relate it to the piecewise linear transformation appearing in the work of Akhtar et al.~\cite{ACGK}, who described combinatorial mutations in terms of dual polytopes.

\begin{proposition}\cite[Proof of Proposition 4]{ACGK}\label{prop_ACGK}
	Let $P \subset N_\R$ be a Fano polytope and let $Q = \mathrm{mut}_{\bf w}(P,G;\{L_h\})$ be the combinatorial mutation along the data $({\bf w}, G, \{L_h\})$ as in Definition 
	\ref{def_comb_mutation}. Then the map 
	\[
		\begin{array}{ccccl}\vs{0.1cm}
			\varphi & : & M_\R & \rightarrow & M_\R \\
					  &   &  m & \mapsto & m - m_{\min}{\bf w} \\
		\end{array}	
	\]
	where $m_{\min} := \mathrm{min} \{\langle m, v \rangle ~|~ v \in \mathrm{vert}(G) \}$ maps $P^\vee$ bijectively onto $Q^\vee$.
\end{proposition}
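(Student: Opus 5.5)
The plan is to verify the two inclusions $\varphi(P^\vee)\subseteq Q^\vee$ and $\varphi^{-1}(Q^\vee)\subseteq P^\vee$ directly, using the polar description $P^\vee=\{m\in M_\R \mid \langle m,v\rangle\ge -1 \text{ for all } v\in\mathcal V(P)\}$ and the analogous description of $Q^\vee$. Since $G\subset{\bf w}^\perp$, we have $\langle {\bf w},g\rangle=0$ for all $g\in G$, and this gives two simple observations.

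\textbf{Step 1: $\varphi$ is invertible.} Set $m'=\varphi(m)=m-m_{\min}{\bf w}$. Because ${\bf w}$ pairs to zero with $G$, we get $\langle m',g\rangle=\langle m,g\rangle$ for all $g\in G$. Hence $m'_{\min}=m_{\min}$. It follows that $\varphi$ is a bijection of $M_\R$ with explicit inverse $\psi(m')=m'+m'_{\min}{\bf w}$. It therefore suffices to prove the two inclusions above.

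\textbf{Step 2: $\varphi(P^\vee)\subseteq Q^\vee$.} By Definition \ref{def_comb_mutation}, the vertices of $Q$ are among two kinds of points, treated as follows.
\begin{itemize}
\item \emph{Points $v+hg$ with $h\ge 0$, $v\in S_h(P,{\bf w})$ and $g\in\mathcal V(G)$.} Here $\langle {\bf w},v\rangle=h$. We compute
\[
\langle m',v+hg\rangle=\langle m,v\rangle+h\bigl(\langle m,g\rangle-m_{\min}\bigr)\ge\langle m,v\rangle\ge -1,
\]
using $v\in P$.
\item \emph{Points $u\in L_h$ with $h<0$.} Choose $g_0\in\mathcal V(G)$ with $\langle m,g_0\rangle=m_{\min}$. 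Condition $(\ast)$ gives $u+(-h)g_0\in S_h(P,{\bf w})\subseteq P$. Hence
\[
\langle m',u\rangle=\langle m,u\rangle-hm_{\min}=\langle m,u+(-h)g_0\rangle\ge -1.
\]
\end{itemize}

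\textbf{Step 3: $\psi(Q^\vee)\subseteq P^\vee$.} Take $m'\in Q^\vee$ and set $m=\psi(m')$, so that $m_{\min}=m'_{\min}$. We check every vertex $v$ of $P$ at height $h=\langle{\bf w},v\rangle$.
\begin{itemize}
\item \emph{Case $h\ge 0$.} Here $v\in S_h$, and for $g_0$ attaining the minimum we have $v+hg_0\in Q$. Thus
\[
-1\le\langle m',v+hg_0\rangle=\langle m,v\rangle-hm_{\min}+hm_{\min}=\langle m,v\rangle.
\]
\item \emph{Case $h<0$.} Condition $(\ast)$ lets us write $v=u+(-h)g$ with $u\in L_h\subseteq Q$ and $g\in G$. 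Since $-h>0$ and $\langle m,g\rangle\ge m_{\min}$,
\[
\langle m,v\rangle\ge\langle m,u\rangle-hm_{\min}=\langle m',u\rangle\ge -1.
\]
\end{itemize}
This shows $m\in P^\vee$, so $\varphi$ maps $P^\vee$ onto $Q^\vee$.

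\textbf{Main obstacle.} The delicate point is the bookkeeping at negative heights. There one must use exactly the two inclusions in $(\ast)$: the inclusion $L_h+(-h)G\subseteq S_h$ is needed for Step 2, and the inclusion $\mathcal V(P)\cap H_h\subseteq L_h+(-h)G$ is needed for Step 3. One must also pick the minimizing vertex $g_0$ for the correct functional. It is also essential to note that $m_{\min}$ is unchanged by $\varphi$, since otherwise the inverse would not have the claimed form. Convexity of $G$ ensures that minimizing over $\mathcal V(G)$ is the same as minimizing over $G$.
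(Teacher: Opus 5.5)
Your proof is correct and complete. Since ${\bf w}$ vanishes on $G$, the map $\varphi$ is a bijection of $M_\R$ with inverse $\psi$, and the two inclusions $\varphi(P^\vee)\subseteq Q^\vee$ and $\psi(Q^\vee)\subseteq P^\vee$ follow from exactly the two halves of $(\ast)$, as you identified.

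The paper gives no argument of its own and simply defers to the proof of Proposition~4 in \cite{ACGK}; your check of the polar inequalities on generators of $Q$ and on vertices of $P$ is the natural direct verification of that cited result. One step is used silently in both negative-height cases: every $u\in L_h$ has ${\bf w}$-height $h$. This follows from $(\ast)$, because $u+(-h)g\in S_h(P,{\bf w})\subseteq H_h(P,{\bf w})$ and $\langle {\bf w},g\rangle=0$ for $g\in G$.
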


Let $P$ be a Fano simplex and $F$ an admissible facet of $P$. Choose a vertex $v_0 \in F$ and let $G$ be a lattice polytope obtained by translating $F$ by $-v_0$ and scaling by $\frac{1}{d_F}$ so that 
\[
	G := G_{F,v_0} = \frac{1}{d_F} \left(F - v_0 \right).
\]
Let ${\bf w}_F \in M_\R$ be the primitive inward normal vector of $P$ to $F$ and define 
\[
	Q := \mu_{F,v_0}(P) = \mathrm{mut}_{{\bf w}_F}(P,G;\{L_h\})
\]
where the collection $\{L_h\}_{h\in\mathbb Z_{<0}}$
is given by
\[
L_{-d_F}=\{v_0\},
\]
and $L_h=\emptyset$ for all $h\neq -d_F$.
Then the piecewise linear map in Proposition \ref{prop_ACGK} is given by 
\[
	\varphi:= \varphi_{F,v_0} : M_{\mathbb{R}} \to M_{\mathbb{R}}, \quad m \mapsto m - m_{\min}\, {\bf w}_F.
\]
The map $\varphi$ consists of $|G|$ linear pieces where $|G|$ denote the number of vertices of $G$.
Accordingly, we label each linear piece by the corresponding vertex of $G$.

\begin{lemma}\label{lemma_SLQ}
	Each linear piece lies in $\mathrm{SL}_n(\mathbb{Z})$. Consequently, $P^\vee$ and $Q^\vee$ have the same volume, the same number of
	lattice points, and the same number of interior lattice points.
\end{lemma}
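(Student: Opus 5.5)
Each linear piece of $\varphi$ is a transvection, so it lies in $\mathrm{SL}_n(\Z)$; the volume and lattice-point statements then follow by cutting $P^\vee$ into the chambers where $\varphi$ is linear.

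\textbf{Step 1: the pieces are transvections.} Fix a vertex $g$ of $G$ and consider the chamber
\[
C_g := \{ m \in M_\R \mid \langle m, g\rangle \le \langle m, g'\rangle \text{ for all } g' \in \mathrm{vert}(G)\}.
\]
On $C_g$ we have $m_{\min} = \langle m, g\rangle$, so $\varphi$ restricts to the linear map
\[
T_g(m) = m - \langle m, g\rangle\, {\bf w}_F .
\]
Since $G \subset {\bf w}_F^\perp$, we have $\langle {\bf w}_F, g\rangle = 0$. Hence $T_g$ is a transvection $\mathrm{id} - {\bf w}_F \otimes g$ with $g$ orthogonal to ${\bf w}_F$. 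Concretely, in a basis of $M$ whose first vector is ${\bf w}_F$ (possible because ${\bf w}_F$ is primitive), the matrix of $T_g$ is unipotent: the identity plus a single off-diagonal row. In particular $\det T_g = 1$.

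\textbf{Step 2: integrality.} The entries of $T_g$ are integers because $g \in N$ (since $G$ is a lattice polytope) and ${\bf w}_F \in M$. The inverse $m \mapsto m + \langle m, g\rangle {\bf w}_F$ is also integral. So $T_g \in \mathrm{SL}_n(\Z)$. This step is immediate; the only point to keep in mind is that $v_0$ was chosen so that $G$ has a vertex at the origin, but $T_g$ is integral for every vertex $g$, so nothing depends on this choice.

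\textbf{Step 3: volume and lattice points.} The chambers $C_g$ are rational polyhedral cones covering $M_\R$, and distinct chambers meet only along their boundaries. Decompose
\[
P^\vee = \bigcup_g (P^\vee \cap C_g).
\]
By Proposition \ref{prop_ACGK}, $\varphi$ is a bijection from $P^\vee$ onto $Q^\vee$, and it acts on each piece $P^\vee \cap C_g$ by the unimodular map $T_g$.

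\emph{Volume.} Unimodular maps preserve volume, and the overlaps between pieces have measure zero. Hence $\Vol(Q^\vee) = \Vol(P^\vee)$.

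\emph{Lattice points.} $\varphi$ is a piecewise-$\mathrm{SL}_n(\Z)$ bijection of $M_\R$, so it maps $M$ bijectively onto $M$. Restricting the bijection $P^\vee \to Q^\vee$ therefore gives a bijection $P^\vee \cap M \to Q^\vee \cap M$.

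\emph{Interior lattice points.} The map $\varphi$ is a homeomorphism of $M_\R$: it is continuous and piecewise linear, and its inverse is the analogous map for $-{\bf w}_F$. A homeomorphism sending $P^\vee$ onto $Q^\vee$ sends interior onto interior. Combined with the lattice-point bijection, this matches the interior lattice points of $P^\vee$ with those of $Q^\vee$.

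The main care point is Step 3: one must check that $\varphi$ is a genuine homeomorphism, so that interiors correspond, and that the pieces are glued consistently. Both follow from the fact that $\varphi$ is continuous, since $m \mapsto m_{\min}$ is a minimum of finitely many linear functionals, and from the explicit form of its inverse.
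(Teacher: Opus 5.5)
Your proof is correct and follows the paper's argument: in a $\Z$-basis of $M$ whose first vector is ${\bf w}_F$, each piece $m \mapsto m - \langle m, g\rangle {\bf w}_F$ is the identity plus the integer entries $-\langle e_j, g\rangle$ in the first row, because $\langle {\bf w}_F, g\rangle = 0$ and $g \in N$. The paper leaves the volume and lattice-point consequences implicit, and your Step 3 supplies them: you decompose into chambers and use the inverse $m \mapsto m + m_{\min}{\bf w}_F$ to show that $\varphi$ is a homeomorphism of $M_\R$ restricting to a bijection of $M$.
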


\begin{proof}
Each linear piece corresponding to a vertex $v \in \mathrm{Vert}(G)$ is given by 
\[
	\varphi_{v} := m \mapsto m - \langle m, v \rangle {\bf w}_F, 
\]
which coincides with the restriction of $\varphi$ to the subdomain 
\[
	\{ m \in M_\R ~|~ m_{\min} = \langle m, v \rangle \}.
\] 
Since $\langle G, {\bf w}_F \rangle = 0$, we also have $\langle v, {\bf w}_F \rangle = 0$ for every $v \in \mathrm{Vert}(G)$.

Take any integral basis $\mathcal{B} = \{ e_1 := {\bf w}_F, e_2, \dots, e_n \} \subseteq M$ of $M_\R$. Then the matrix representation of 
$\varphi_v$ with respect to $\mathcal{B}$ is of the form
\[
[\varphi_v]_{\mathcal{B}} =
\begin{pmatrix}
1 & a_{12} & \cdots & a_{1n} \\
0 & 1 &        & 0 \\
\vdots &  & \ddots & \\
0 & 0 &        & 1
\end{pmatrix},
\quad \quad a_{1j} = - \langle e_j, v \rangle \in \Z
\]
which lies in $\mathrm{SL}_n(\mathbb{Z})$. This proves the lemma.
\end{proof}

Let $p \in P^\vee$ and let $\ell_p$ denote the maximal line segment in $P^\vee$ passing through $p$ and parallel to ${\bf w}_F$, namely
\[
	\ell_p := \{\, p+t{\bf w}_F \mid t \in \R \,\} \cap P^\vee.
\]

\begin{lemma}\label{lemma_linear_piece}
	Let $p \in P^\vee$ and let
	\[
		\varphi_v : m \mapsto m-\langle m,v\rangle{\bf w}_F
	\]
	be the linear piece whose domain contains $p$, for some $v\in\mathrm{Vert}(G)$.
	Then every point on $\ell_p$ belongs to the domain of $\varphi_v$. Moreover, the domain of $\varphi_v$ is precisely the union of ${\bf w}_F$-segments starting from $D_v$, that is, 
	\[
		\left\{ p+t{\bf w}_F \in P^\vee ~\middle|~ p\in P^\vee,\; 0\le t\le t_p,\; p+t_p{\bf w}_F\in D_v \right\}
	\]
	where $D_v$ denotes the facet of $P^\vee$ dual to the vertex $v_0+d_Fv$ in $F$.
\end{lemma}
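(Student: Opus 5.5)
The plan is to use the fact that ${\bf w}_F$ pairs to zero with every point of $G$. Consequently, for each $m$ the function $v \mapsto \langle m,v\rangle$ on $\mathrm{Vert}(G)$ is unchanged when $m$ is replaced by $m+t{\bf w}_F$. Concretely, for $v\in\mathrm{Vert}(G)$ we have
\[
\langle m+t{\bf w}_F, v\rangle = \langle m,v\rangle + t\langle {\bf w}_F,v\rangle = \langle m,v\rangle .
\]
Hence the vertex of $G$ attaining $m_{\min}$ is the same for $m$ and for $m+t{\bf w}_F$. It follows that the domain $\{m \mid m_{\min}=\langle m,v\rangle\}$ of $\varphi_v$ is invariant under translation by $\R{\bf w}_F$, and in particular it contains all of $\ell_p$ whenever it contains $p$. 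This proves the first assertion immediately.

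For the second assertion, I would first describe $P^\vee$ in terms of the vertices of $P$:
\[
P^\vee=\{m\mid \langle m,u\rangle\ge -1 \ \text{for all } u\in\mathrm{Vert}(P)\},
\]
using the inward convention that matches ${\bf w}_F$ being inward. With this convention, the facet of $P^\vee$ dual to a vertex $u$ is $\{\langle m,u\rangle=-1\}$. Set $u_v:=v_0+d_Fv\in\mathrm{Vert}(F)$. Along $\ell_p$, moving in the direction $+{\bf w}_F$ changes $\langle m,u\rangle$ at rate $\langle {\bf w}_F,u\rangle$. This rate equals $-d_F<0$ for $u\in\mathrm{Vert}(F)$ and equals $h_{\max}>0$ for the remaining vertex $v_n$. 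So the forward endpoint $p+t_p{\bf w}_F$ of $\ell_p$ lies on a facet $D_u$ with $u\in \mathrm{Vert}(F)$, namely the one minimizing $\langle p,u\rangle$ over $u\in\mathrm{Vert}(F)$. The backward endpoint lies on $D_{v_n}$.

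Next I would compare the two minimizations. We have $\langle p,u_v\rangle=\langle p,v_0\rangle+d_F\langle p,v\rangle$, and the term $\langle p,v_0\rangle$ does not depend on $v$. So minimizing $\langle p,u\rangle$ over $u\in\mathrm{Vert}(F)$ is the same as minimizing $\langle p,v\rangle$ over $v\in\mathrm{Vert}(G)$. Therefore $p$ lies in the domain of $\varphi_v$ exactly when the forward endpoint of $\ell_p$ lies on $D_v$. Together with the ${\bf w}_F$-invariance from the first paragraph, this identifies the domain of $\varphi_v$ within $P^\vee$ with the union of the ${\bf w}_F$-segments that end on $D_v$, which is the displayed set.

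The main obstacle is the sign and duality conventions: which sign defines $P^\vee$ in $M_\R$, whether ${\bf w}_F$ inward means $\langle {\bf w}_F,F\rangle=-d_F$ (as in Section 3), and the direction of the segments. I would fix these carefully so that the segments run from $D_{v_n}$ toward $D_v$. I would also handle ties: when several vertices attain the minimum, $p$ lies in several domains and the endpoint lies on the intersection of the corresponding facets, and the same argument goes through.
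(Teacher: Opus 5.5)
Your proposal is correct and follows essentially the same route as the paper. The first claim comes from $\langle {\bf w}_F, v\rangle = 0$ for $v \in G$, and the second from $\langle m, v_0 + d_F v\rangle = \langle m, v_0\rangle + d_F\langle m, v\rangle$, which makes minimizing over $\mathrm{Vert}(G)$ equivalent to minimizing over $\mathrm{Vert}(F)$. Your rate computation ($\langle {\bf w}_F, u\rangle = -d_F$ on $\mathrm{Vert}(F)$, positive on the opposite vertex) locates the forward endpoint of $\ell_p$ and so proves both inclusions of the set equality. The paper only argues explicitly that segments through $D_v$ lie in the domain of $\varphi_v$, so your step usefully supplies the reverse inclusion.
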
	
\begin{proof}
        Since $\langle {\bf w}_F, v' \rangle = 0$ for every $v' \in G$, we have
	\[
		(p+t{\bf w}_F)_{\min} = \min_{v' \in \mathrm{Vert}(G)} \langle p + t{\bf w}_F, v' \rangle = \min_{v' \in \mathrm{Vert}(G)} \langle p , v' \rangle = \langle p , v \rangle,
	\]
	which proves the first claim.
	
	Recall that a point $m \in P^\vee$ is in the domain of $\varphi_v$ if 
	\[
		m_{\min} = \min_{v' \in \mathrm{Vert}(G)} \langle m, v' \rangle = \langle m, v \rangle.
	\]
	It is equivalent to saying that 
	\[
		m_{\min} = \min_{v' \in \mathrm{Vert}(G)} \langle m, v_0 + d_Fv' \rangle = \min_{u' \in \mathrm{Vert}(F)} \langle m, u' \rangle = \langle m, u \rangle
	\]	
	where $u = v_0 + d_F v$ is the vertex of $P$ corresponding to $v$. 
	Now let $f \in D_v$. Since 
	\[
		\langle f, v_0 + d_Fv \rangle = -1,
	\]
	which is the minimum of
	\[
		\{\langle f,u'\rangle \mid u'\in \mathrm{Vert}(F)\},
	\]
	every point on the maximal line segment through $f$ parallel to ${\bf w}_F$ belongs to the domain of $\varphi_v$ by the first claim. 
	This proves the second claim.
\end{proof}

\begin{proposition}\label{prop_sliding}
	Let $D(v_0)$ be the facet of $P^\vee$ dual to $v_0 \in \mathrm{Vert}(P)$. Then the sliding of $P^\vee$ along $({\bf w}_F, D(v_0))$
	coincides with $Q^\vee$. In other words, 
	\[
		\mu_{{\bf w}_F, D(v_0)}(P^\vee) = Q^\vee.
	\]
\end{proposition}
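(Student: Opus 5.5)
Since $\varphi$ maps $P^\vee$ bijectively onto $Q^\vee$ (Proposition \ref{prop_ACGK}), it suffices to show that $\varphi(P^\vee)$ has the two properties in Definition \ref{def_sliding} relative to $({\bf w}_F, D(v_0))$. We also need $Q^\vee$ to lie in the half-space $H_{D(v_0)}^+$, and ${\bf w}_F$ to be a vertex of $P^\vee$ lying on $D(v_0)$. The latter holds because ${\bf w}_F/d_F$ is the vertex of $P^\vee$ dual to $F$, and $v_0\in F$. Up to this harmless rescaling of the direction vector, the definition only depends on the direction ${\bf w}_F$.

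\textbf{Step 1: length preservation.} Fix $p\in P^\vee$ and the segment $\ell_p$. By Lemma \ref{lemma_linear_piece}, the whole of $\ell_p$ lies in the domain of a single linear piece $\varphi_v$. On that segment,
\[
\varphi_v(p+t{\bf w}_F)=p+t{\bf w}_F-\langle p,v\rangle{\bf w}_F ,
\]
using $\langle {\bf w}_F, v\rangle=0$. So $\varphi$ restricted to $\ell_p$ is a translation along ${\bf w}_F$ by $-\langle p,v\rangle$. Hence $\varphi$ maps lines in direction ${\bf w}_F$ to themselves, and lines are mapped to lines, because $m$ and $m+t{\bf w}_F$ have the same $m_{\min}$. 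Together with the bijectivity of $\varphi$, this gives
\[
|\ell\cap P^\vee|=|\ell\cap Q^\vee|
\]
for every line $\ell$ parallel to ${\bf w}_F$.

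\textbf{Step 2: contact with $H_{D(v_0)}$.} The hyperplane $H_{D(v_0)}$ is $\{m : \langle m, v_0\rangle=-1\}$, and $P^\vee$ lies on the side where $\langle m,v_0\rangle\ge -1$. We compute
\[
\langle \varphi(m),v_0\rangle=\langle m,v_0\rangle-m_{\min}\langle {\bf w}_F,v_0\rangle=\langle m,v_0\rangle+d_F\,m_{\min}.
\]
Writing $u=v_0+d_Fv$, this equals $\min_{u\in\mathrm{Vert}(F)}\langle m,u\rangle$. For $m\in P^\vee$ this quantity is $\ge -1$, so $Q^\vee\subset H_{D(v_0)}^+$.

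It remains to check that each segment $\varphi(\ell_p)$ touches $H_{D(v_0)}$ in exactly one point. Along $\ell_p$ the piece $\varphi_v$ is fixed, and $\min_{u}\langle m,u\rangle=\langle m,u_v\rangle$ with $u_v=v_0+d_Fv$. By Lemma \ref{lemma_linear_piece}, the segment $\ell_p$ has an endpoint on $D_v$, where $\langle m,u_v\rangle=-1$. That endpoint is mapped into $H_{D(v_0)}$. Since $\langle {\bf w}_F,u_v\rangle=-d_F\neq 0$, the function $\langle \cdot,u_v\rangle$ is strictly monotone along $\ell_p$. Therefore exactly one point of $\varphi(\ell_p)$ lies in $H_{D(v_0)}$.

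\textbf{Step 3: uniqueness of the sliding.} A polytope in $H_D^+$ is determined by the intersections $\ell\cap\Delta$ over all lines $\ell$ parallel to ${\bf w}_F$. Each such intersection is a segment of prescribed length with one endpoint on $H_D$ and lying in $H_D^+$, so it is determined. This forces $\mu_{{\bf w}_F,D(v_0)}(P^\vee)=Q^\vee$; it also settles existence, with the sliding realized by $\varphi$.

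\textbf{Main obstacle.} The delicate point is Lemma \ref{lemma_linear_piece}'s assertion that every maximal ${\bf w}_F$-segment ends on the appropriate facet $D_v$. More precisely, one needs that the endpoint minimizing $\langle\cdot,u_v\rangle$ lies on $D_v$ rather than on some other facet. I would verify this directly: $P^\vee=\{m:\langle m,v_i\rangle\ge-1\ \forall i\}$, and $\langle {\bf w}_F,v_n\rangle>0$ while $\langle {\bf w}_F,v_i\rangle<0$ for the vertices $v_i$ of $F$. Hence moving in the direction $-{\bf w}_F$ only the constraint from $v_n$ tightens, and moving in the direction $+{\bf w}_F$ the first constraint to become active is the one from the vertex $u$ of $F$ minimizing $\langle m,u\rangle$, which is $u_v$.
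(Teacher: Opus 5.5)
Your proposal is correct and follows essentially the same route as the paper. You realize the sliding by the map $\varphi$ of Proposition~\ref{prop_ACGK}, observe that it acts on each line parallel to ${\bf w}_F$ by a translation (so lengths are preserved), and check that the endpoint of $\ell_p$ on $D_v$ is sent into $H_{D(v_0)}$. Your additional points are all sound: the rescaling to the vertex ${\bf w}_F/d_F$, the containment $Q^\vee\subset H^+_{D(v_0)}$ via $\langle\varphi(m),v_0\rangle=\min_{u\in\mathrm{Vert}(F)}\langle m,u\rangle$, and the direct check that the $+{\bf w}_F$-endpoint lies on $D_v$ (which Lemma~\ref{lemma_linear_piece} only sketches). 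One small caveat: in Step~3, a prescribed length $0$ on lines missing $\Delta$ does not by itself exclude stray points, so uniqueness needs a short convexity argument.
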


\begin{proof}
	Let $p\in P^\vee$ and let
	\[
		\varphi_v: m \mapsto m-\langle m,v\rangle{\bf w}_F
	\]
	be the linear piece whose domain contains $p$. By Lemma~\ref{lemma_linear_piece}, every point of $\ell_p$ belongs to the domain of $\varphi_v$.

	We first show that $\varphi$ preserves the length of each segment $\ell_p$.	
	Since $\langle {\bf w}_F,v\rangle=0$, the restriction of $\varphi_v$ to $\ell_p$ is given by
	\[
		\varphi_v ~|_{\ell_p} : p + t{\bf w}_F \mapsto p + t{\bf w}_F - \langle p + t{\bf w}_F, v \rangle {\bf w}_F = p + t{\bf w}_F - \langle p, v \rangle {\bf w}_F
	\]
	Hence $\varphi_v$ acts on $\ell_p$ by translation along the vector $-\langle p,v\rangle{\bf w}_F$. In particular, we have 
	\[
		|\ell_p|=|\varphi_v(\ell_p)|.
	\]
	
	Next, we show that $\varphi_v(\ell_p)$ meets $H_{D(v_0)}$ in exactly one point. Let $p+t_p{\bf w}_F$ be the endpoint of $\ell_p$ lying on the facet $D_v$.
	By Lemma \ref{lemma_linear_piece}, 
	\[
		\langle p+t_p{\bf w}_F, v_0+d_Fv \rangle =-1.
	\]
	Using $\langle {\bf w}_F, v_0 \rangle = -d_F$, we obtain
	\[
		\begin{aligned}
			\left\langle p+t_p{\bf w}_F - \langle p+t_p{\bf w}_F,v\rangle {\bf w}_F, \, v_0 \right\rangle &= \left\langle p+t_p{\bf w}_F, \, v_0-\langle {\bf w}_F,v_0\rangle v \right\rangle \\ &
			= \left\langle p+t_p{\bf w}_F, \, v_0 + d_F v \right\rangle \\ &
			= -1.
		\end{aligned}
	\]
	Hence
	$
		\varphi_v(p+t_p{\bf w}_F)\in H_{D(v_0)}.
	$
	Since $\varphi_v$ acts on $\ell_p$ by translation, it follows that
	\[
		\varphi_v(\ell_p)\cap H_{D(v_0)}
	\]
	consists of a single point. This completes the proof.
\end{proof}

\begin{example} Let $P = \text{conv}({\bf e}_1,{\bf e}_2, -{\bf e}_1 - {\bf e}_2)$ be the Fano simplex of $\p(1,1,1)$ and $Q$ be the mutation of $P$ along $({\bf w}, G)$ given in Example \ref{example_standard_simplex}.
Then the sliding of $P^\vee$ onto $Q^\vee$ can be described as in Figure \ref{figure_sliding}.

\begin{figure}[H]
\centering
\begin{tikzpicture}[x=0.8cm,y=0.8cm]


\begin{scope}

\draw[->] (-2.5,0) -- (3.5,0) node[right] {$x$};
\draw[->] (0,-2) -- (0,3) node[above] {$y$};

\filldraw[fill=blue,fill opacity=0.25,draw=black,thick]
(-1,-1)--(-1,2)--(2,-1)--cycle;

\draw[blue,very thick] (-1,-1)--(2,-1);

\fill (-1,-1) circle (2pt);
\fill (-1,2) circle (2pt);
\fill (2,-1) circle (2pt);

\node[left] at (-1.3,2) {$(-1,2)$};
\node[below left] at (-1,-1) {$(-1,-1)$};
\node[below right] at (2,-1) {$(2,-1)$};

\node[blue] at (0.5,-1.35) {$D$};

\foreach \c in {-1,0,1,2}
{
\draw[dashed,gray]
(-2,-2+\c)--(3,3+\c);
}

\draw[->,thick]
(2.2,2.2)--(1.3,1.3);

\node[right] at (2.2,2.2)
{$\mathbf w=(-1,-1)$};

\node at (-1.3,3)
{$\Delta = P^\vee$};

\end{scope}


\draw[->,very thick]
(4.5,0.5)--(6.5,0.5);

\node at (5.5,0.9)
{sliding};


\begin{scope}[xshift=10cm]

\draw[->] (-5,0) -- (3,0) node[right] {$x$};
\draw[->] (0,-2) -- (0,3) node[above] {$y$};

\filldraw[fill=red,fill opacity=0.25,draw=black,thick]
(-4,-1)--(.5,.5)--(2,-1)--cycle;

\draw[blue,very thick] (-4,-1)--(2,-1);

\fill (-4,-1) circle (2pt);
\fill (.5,.5) circle (2pt);
\fill (2,-1) circle (2pt);

\node[below left] at (-4,-1)
{$(-4,-1)$};

\node[above] at (.5,.5)
{$\left(\frac12,\frac12\right)$};

\node[below right] at (2,-1)
{$(2,-1)$};

\node[blue] at (-1.5,-1.35) {$D$};

\foreach \c in {-1,0,1,2}
{
\draw[dashed,gray]
(-2,-2+\c)--(3,3+\c);
}

\filldraw[fill=blue,fill opacity=0.01,draw=black,thin]
(-1,-1)--(-1,2)--(2,-1)--cycle;

\node at (-2,3)
{$\mu_{\mathbf w,D}(\Delta) = Q^\vee$};

\end{scope}

\end{tikzpicture}
\caption{Sliding of $\Delta$ along $(\mathbf w,D)$.}
\label{figure_sliding}
\end{figure}

\end{example}

Our final goal of this section is to calculate an explicit formula for the volume of $P^\vee$ in terms of the coefficients of $\mathrm{DE}_P$ and the multiplicity of $P$. 
This formula also yields an alternative proof of the multiplicity change formula in Proposition~\ref{prop_CGKN}.

\begin{theorem}\label{thm_multiplicity}
Let $P$ be a Fano $n$-simplex corresponding to a solution $(a_0,\dots,a_n)$ of the weighted Markov-type equation $\mathrm{DE}_P$ given by 
\[
c_0x_0^2+\cdots+c_nx_n^2 = m x_0\cdots x_n.
\]
Then
\[
\Vol(P^\vee)
=
\frac{m^n}{n!\,\prod c_i}
\cdot
\frac{(\prod a_i)^{n-2}}{\mult(P)}.
\]
In particular, if $P$ and $Q$ are related by a mutation over an admissible facet and ${\bf a}_Q = (a_0', \dots, a_n')$, then 
\[
\frac{(\prod a_i)^{n-2}}{\mult(P)}
=
\frac{(\prod a_i')^{n-2}}{\mult(Q)}.
\]
\end{theorem}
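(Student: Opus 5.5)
The plan is to prove a closed formula for $\Vol(P^\vee)$ in terms of the weight $\lambda$ and the multiplicity $d=\mult(P)$, namely
\[
\Vol(P^\vee)=\frac{(\lambda_0+\cdots+\lambda_n)^n}{n!\, d\,\lambda_0\lambda_1\cdots\lambda_n},
\]
and then substitute the Diophantine data. As a sanity check, for $\p^2$ this gives $9/2$, and for $\p(1,1,2)$ it gives $4$, both correct. Since $\lambda_i=c_ia_i^2$ and $\sum\lambda_i=\frac{m}{k}\prod a_i$, substitution gives $\frac{(m/k)^n(\prod a_i)^n}{n!\,d\prod c_i\prod a_i^2}$. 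This is the stated formula once we read the coefficient $m$ in the statement as the ratio $m/k$ (equivalently, normalize $k=1$). I would make that convention explicit at the start.

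For the closed formula I would use linear algebra in coordinates. Let $\mcal{V}(P)=\{v_0,\dots,v_n\}$ with the primitive linear relation $\sum\lambda_iv_i=0$. Let $V$ be the matrix with columns $v_1,\dots,v_n$, so that $|\det V|=\widetilde\lambda_0=d\lambda_0$. Let $u_i\in M_\R$ be the vertex of $P^\vee$ dual to the facet $F_i$. It is characterized by $\langle u_i,v_j\rangle=-1$ for all $j\neq i$. Then
\[
\Vol(P^\vee)=\frac{1}{n!}\,|\det(u_1-u_0,\dots,u_n-u_0)|.
\]
The key computation pairs $u_i-u_0$ (for $i\ge1$) with $v_j$ ($j\ge1$). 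For $j\ne i$ the pairing is $0$, and for $j=i$ it is $\langle u_i,v_i\rangle+1$. To evaluate $\langle u_i,v_i\rangle$, pair the relation $\lambda_0v_0=-\sum_{j\ge1}\lambda_jv_j$ with $u_i$. This gives $-\lambda_0=\sum_{j\ne0,i}\lambda_j-\lambda_i\langle u_i,v_i\rangle$, hence $\langle u_i,v_i\rangle+1=\frac{\sum_j\lambda_j}{\lambda_i}$.

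Therefore $V^{T}(u_1-u_0,\dots,u_n-u_0)$ is the diagonal matrix with entries $\sum\lambda/\lambda_i$ for $i=1,\dots,n$. Taking determinants,
\[
|\det(u_i-u_0)_i|=\frac{(\sum\lambda)^n}{\lambda_1\cdots\lambda_n}\cdot\frac{1}{d\lambda_0},
\]
which is the closed formula.

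For the second assertion, combine three earlier results. By Lemma~\ref{lemma_SLQ} (equivalently Proposition~\ref{prop_sliding}), $\Vol(P^\vee)=\Vol(Q^\vee)$. By Theorem~\ref{thm_DE_inv}, $\mathrm{DE}_P=\mathrm{DE}_Q$, so $m$, $k$ and the $c_i$ coincide for $P$ and $Q$. By the same proof, ${\bf a}_Q=\mu_i({\bf a}_P)$. Cancelling the common factor $m^n/(n!\prod c_i)$ from the volume formulas for $P$ and $Q$ yields the identity. Combined with Lemma~\ref{lemma_Vieta}, this recovers the multiplicity formula of Proposition~\ref{prop_CGKN}.

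The only delicate points are bookkeeping. First, $d\lambda_0=|\det V|$ must be the correct index, by the definition of $\widetilde\lambda_0$. Second, the $k$ versus $m$ normalization in the statement has to be handled as described above. I expect no real obstacle beyond these.
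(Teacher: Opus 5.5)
Your proposal is correct and is essentially the paper's proof. The paper also proves $\Vol(P^\vee)=(\sum\lambda_i)^n/(n!\,\mult(P)\prod\lambda_i)$ by pairing $u_i-u_0$ against $v_1,\dots,v_n$ and then substitutes $\lambda_i=c_ia_i^2$ and $\sum\lambda_i=m\prod a_i$. The only bookkeeping difference is that the paper phrases this via the dual basis $\beta_i$ and the indices $\delta_i=[N:N_i]=\mult(P)\lambda_i$, justified by a lemma $[N':N_i]=\lambda_i$, whereas you read $|\det V|=d\lambda_0$ directly off the definition of $\widetilde\lambda_0$; that step tacitly uses the standard Cramer's-rule fact that $\widetilde\lambda$ is proportional to the vertex relation.

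Two points in your write-up go beyond what the paper makes explicit. The paper silently takes $k=1$, and your remark that $m$ must be read as $m/k$ is genuinely needed: $\mathbb{P}(1,1,9)$ has $k=3$. The paper also leaves the ``in particular'' clause implicit, and your derivation of it from Lemma~\ref{lemma_SLQ} and Theorem~\ref{thm_DE_inv} supplies it.
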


\begin{remark}

We note that the quantity
\[
\frac{(\prod_i a_i)^{n-2}}{\mult(P)}
\]
is closely related to the anticanonical degree of the fake weighted projective space associated to a Fano simplex. Indeed, the volume formula
\[
\Vol(P^\vee)
=
\frac{\left(\sum_i \lambda_i\right)^n}
{n!\,\mult(P)\prod_i \lambda_i}
\]
appears naturally in several contexts, including reflexive simplices~\cite{Ni} and fake weighted projective spaces~\cite{Ka,Ba}. Our derivation arises from the dual-polytope interpretation of combinatorial mutation via sliding operators. In particular, this viewpoint yields a geometric explanation of the multiplicity change formula in Theorem~\ref{thm_multiplicity}.
\end{remark}

To prove Theorem \ref{thm_multiplicity}, we setup the following. 
Let
$
P=\operatorname{conv}(v_0,\dots,v_n)\subset N_{\mathbb R}
$
be an $n$-dimensional Fano simplex with the primitive positive relation among all vertices
\[
	\lambda_0v_0+\cdots+\lambda_nv_n=0, \quad \lambda_i\in \mathbb Z_{>0} \quad\text{for all }i, \qquad \gcd(\lambda_0,\dots,\lambda_n)=1.
\]
Let $N':=\mathbb Z v_0+\cdots+\mathbb Z v_n\subset N$ be a sublattice of $N$ generated by the vertices of $P$ and denote by 
\[
	\mult(P):=[N:N']
\]
the multiplicity of $P$. 
For each $i=0,\dots,n$, let
	\[
		F_i:=\operatorname{conv}(v_0,\dots,\widehat{v_i},\dots,v_n)
	\]
be the facet opposite to $v_i$, and let
\[
N_i:=\mathbb Z v_0+\cdots+\widehat{\mathbb Z v_i}+\cdots+\mathbb Z v_n
\subset N'.
\]
Note that each $N_i$ has rank $n$.

\begin{lemma}\label{lemma_index}
	For each $i=0,\dots,n$, one has
	\[
		[N':N_i]=\lambda_i.
	\]
\end{lemma}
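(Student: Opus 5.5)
We will compute $[N':N_i]$ through the quotient group $N'/N_i$, which is generated by the image of $v_i$ alone. Since $v_0,\dots,\widehat{v_i},\dots,v_n$ generate $N_i$, and $N'$ is generated by these together with $v_i$, the quotient $N'/N_i$ is cyclic, generated by $\bar v_i$. Therefore $[N':N_i]$ equals the order of $\bar v_i$, namely the smallest positive integer $k$ with $k v_i\in N_i$.

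First, the relation $\lambda_i v_i = -\sum_{j\neq i}\lambda_j v_j$ shows $\lambda_i v_i\in N_i$, so the order $k$ divides $\lambda_i$. Conversely, suppose $k v_i=\sum_{j\ne i} b_j v_j$ with $b_j\in\Z$. Because $P$ is an $n$-simplex containing the origin in its interior, the vertices $v_0,\dots,v_n$ span $N_\R$ and satisfy exactly a one-dimensional space of real linear relations. That space is spanned by $(\lambda_0,\dots,\lambda_n)$, as any $n$ of the vertices are linearly independent. Hence the integer relation $k v_i-\sum_{j\ne i}b_jv_j=0$ must be a rational multiple $t(\lambda_0,\dots,\lambda_n)$ of the primitive relation. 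Comparing the $i$-th coordinates gives $k=t\lambda_i$.

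Since the relation vector is integral and $(\lambda_0,\dots,\lambda_n)$ is primitive ($\gcd=1$), the multiplier $t$ must be an integer. Indeed, if $t=p/q$ in lowest terms, then $q$ divides every $\lambda_j$, so $q=1$. With $k>0$ this forces $t\ge 1$, and hence $k\ge\lambda_i$. Together with $k\mid\lambda_i$, we get $[N':N_i]=\lambda_i$.

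The only delicate point is justifying that the relation space is one-dimensional and that the primitive positive relation in the setup is exactly the generator of this space over $\Z$. Both follow from $v_0,\dots,\widehat{v_i},\dots,v_n$ being linearly independent, since they are the vertices of a facet not containing the origin. I would state this explicitly. Otherwise the argument is routine.
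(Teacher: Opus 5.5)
Your proposal is correct and follows essentially the same route as the paper: $N'/N_i$ is cyclic, generated by the class of $v_i$; the primitive relation shows that its order divides $\lambda_i$; and any relation $kv_i=\sum_{j\neq i}b_jv_j$ must be an integral multiple of $(\lambda_0,\dots,\lambda_n)$, which forces $\lambda_i\mid k$. The paper simply asserts that the relation is unique up to sign and integral multiples, whereas you justify this explicitly via the one-dimensionality of the real relation space and the primitivity ($\gcd=1$) argument.
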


\begin{proof}
	Indeed, in the quotient group $N'/N_i$, the classes of $v_j$ for $j\neq i$ vanish, so
$N'/N_i$ is generated by the class of $v_i$.
From the primitive relation
\[
\lambda_0v_0+\cdots+\lambda_nv_n=0,
\]
we obtain in $N'/N_i$ the relation
\[
\lambda_i[v_i]=0.
\]
Thus the order of $[v_i]$ divides $\lambda_i$.

To see that the order is exactly $\lambda_i$, suppose that
\[
k[v_i]=0
\qquad\text{in }N'/N_i
\]
for some positive integer $k$. Then
\[
kv_i\in N_i,
\]
so there exist integers $a_j$ ($j\neq i$) such that
\[
kv_i+\sum_{j\neq i} a_jv_j=0.
\]
Since the relation
\[
\lambda_0v_0+\cdots+\lambda_nv_n=0
\]
is primitive and unique up to sign among the vertices of the simplex, the vector
\[
(a_0,\dots,a_{i-1},k,a_{i+1},\dots,a_n)
\]
must be an integral multiple of
\[
(\lambda_0,\dots,\lambda_n).
\]
In particular, $\lambda_i$ divides $k$.
Hence the order of $[v_i]$ is exactly $\lambda_i$, and therefore
\[
[N':N_i]=|N'/N_i|=\lambda_i.
\]
This proves the claim.

\end{proof}

\begin{proposition}\label{prop:dual-volume-weight-multiplicity}
Then the volume of the polar dual simplex $P^\vee\subset M_{\mathbb R}$ is
\[
\Vol(P^\vee)
=
\frac{(\lambda_0+\cdots+\lambda_n)^n}
{n!\,\mult(P)\,\lambda_0\cdots\lambda_n}.
\]
\end{proposition}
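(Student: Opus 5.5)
Since $P^\vee$ is the simplex cut out by the $n+1$ inequalities $\langle m, v_i\rangle \ge -1$, its vertices are the points $m_i \in M_\R$ dual to the facets $F_i$. That is, $m_i$ is determined by $\langle m_i, v_j\rangle = -1$ for all $j \neq i$. We plan to compute $\Vol(P^\vee)$ by decomposing it into the $n+1$ cones over its facets. Equivalently, we write $\Vol(P^\vee) = \frac{1}{n!}|\det(m_1-m_0,\dots,m_n-m_0)|$ and evaluate this determinant using the relation $\sum \lambda_i v_i = 0$ together with Lemma \ref{lemma_index}.

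\textbf{Step 1 (linear algebra in $N'$).} We first compute $\langle m_i, v_i\rangle$. Pairing $m_i$ with the relation gives $\lambda_i\langle m_i,v_i\rangle - \sum_{j\ne i}\lambda_j = 0$, so
\[
\langle m_i, v_i\rangle = \frac{\Lambda-\lambda_i}{\lambda_i}, \qquad \Lambda := \lambda_0+\cdots+\lambda_n .
\]
Hence $\langle m_i - m_n, v_j\rangle$ equals $\Lambda/\lambda_i$ if $j=i$, equals $0$ if $j \ne i$ and $j \neq n$, and is some value we do not need when $j=n$ (here $i<n$). So the pairing matrix between $(m_i-m_n)_{i<n}$ and $(v_j)_{j<n}$ is diagonal with entries $\Lambda/\lambda_i$.

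\textbf{Step 2 (converting to volume in $M$).} For any basis, $\det(\langle m_i-m_n, v_j\rangle) = \det[m_i-m_n]\cdot\det[v_j]$, with determinants taken in dual bases of $M$ and $N$. The vectors $v_0,\dots,v_{n-1}$ generate $N_n$, and $[N:N_n] = [N:N']\,[N':N_n] = \mult(P)\,\lambda_n$ by Lemma \ref{lemma_index}. Thus $|\det[v_j]| = \mult(P)\lambda_n$, and therefore
\[
|\det[m_i-m_n]| = \frac{\Lambda^n}{\lambda_0\cdots\lambda_{n-1}}\cdot\frac{1}{\mult(P)\,\lambda_n}.
\]
Dividing by $n!$ gives the claimed formula.

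The main obstacle is bookkeeping rather than ideas. The point needing care is that the determinant of $(v_0,\dots,v_{n-1})$ with respect to a basis of $N$ is exactly the index $[N:N_n]$. This follows because the $v_j$ for $j<n$ form a basis of $N_n$, which has rank $n$ since $P$ is a simplex containing $\mathbf{0}$ in its interior. Combining this with Lemma \ref{lemma_index} and multiplicativity of indices finishes the argument. The sign conventions (inward versus outward normals, the $-1$ normalization) only affect signs, which disappear under absolute value.
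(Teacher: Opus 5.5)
Your proof is correct and follows the paper's own argument. Like you, the paper computes $\langle u_i,v_i\rangle$ by pairing the vertex with the positive relation. It then shows that the differences $u_i-u_0$ pair diagonally with the remaining $v_k$, and converts this into a volume using $|\det(v_1,\dots,v_n)|=[N:N_0]=\mult(P)\lambda_0$ from Lemma~\ref{lemma_index}.

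The differences are cosmetic. The paper uses base vertex $u_0$ where you use $m_n$. It works with the rescaled relation $\sum\delta_i v_i=0$, where $\delta_i=\mult(P)\lambda_i$. It uses a dual basis where you factor the pairing determinant. The cone decomposition you mention at the start is never used.
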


\begin{proof}
We first define
	\[
		\delta_i:=[N:N_i] = [N:N']\,[N':N_i]. \qquad (i=0,\dots,n).
	\]
Then, by Lemma \ref{lemma_index}, we have 
\[
	\delta_i=\mult(P)\,\lambda_i. 
\]
In particular, we have 
\begin{equation}\label{eq_relation}
	\delta_0v_0+\cdots+\delta_nv_n = \mult(P)\bigl(\lambda_0v_0+\cdots+\lambda_nv_n\bigr)=0.
\end{equation}

To compute the volume of $P^\vee$, we first recall that 
\[
	P^\vee:=\{u\in M_{\mathbb R}\mid \langle u,v\rangle\ge -1 \text{ for all }v\in P\}.
\]
For each $i=0,\dots,n$, let $u_i\in M_{\mathbb R}$ be the vertex of $P^\vee$
corresponding to the facet $F_i$.
Then by definition,
\[
\langle u_i,v_j\rangle=-1
\qquad\text{for all }j\neq i.
\]
Set
\[
h:=\delta_0+\cdots+\delta_n.
\]
Since $\delta_i=\mult(P)\lambda_i$, we also have
\[
h=\mult(P)(\lambda_0+\cdots+\lambda_n).
\]
Pairing the relation \eqref{eq_relation} with $u_i$, we get
\[
0
=
\left\langle u_i,\sum_{j=0}^n \delta_jv_j\right\rangle
=
\delta_i\langle u_i,v_i\rangle+\sum_{j\neq i}\delta_j\langle u_i,v_j\rangle.
\]
Since $\langle u_i,v_j\rangle=-1$ for $j\neq i$, this becomes
\[
0
=
\delta_i\langle u_i,v_i\rangle-\sum_{j\neq i}\delta_j
=
\delta_i\langle u_i,v_i\rangle-(h-\delta_i).
\]
Therefore
\[
\langle u_i,v_i\rangle=\frac{h-\delta_i}{\delta_i}
=\frac{h}{\delta_i}-1.
\]

Fix $u_0$ as a base vertex of $P^\vee$.
For $i=1,\dots,n$ and $k=1,\dots,n$, we compute
\[
\langle u_i-u_0,v_k\rangle.
\]
If $k\neq i$, then
\[
\langle u_i,v_k\rangle=-1
\qquad\text{and}\qquad
\langle u_0,v_k\rangle=-1,
\]
hence
\[
\langle u_i-u_0,v_k\rangle=0.
\]
If $k=i$, then
\[
\langle u_i-u_0,v_i\rangle
=
\langle u_i,v_i\rangle-\langle u_0,v_i\rangle
=
\left(\frac{h}{\delta_i}-1\right)-(-1)
=
\frac{h}{\delta_i}.
\]
Thus we have 
\[
\langle u_i-u_0,v_k\rangle
=
\begin{cases}
\dfrac{h}{\delta_i}, & k=i,\\[4pt]
0, & k\neq i.
\end{cases}
\]

Let $\beta_1,\dots,\beta_n\in M_{\mathbb Q}$ be the $\mathbb Q$-dual basis to
$v_1,\dots,v_n$, namely
\[
\langle \beta_i,v_j\rangle=\delta_{ij}.
\]
Then the above pairing identities imply
\[
u_i-u_0=\frac{h}{\delta_i}\beta_i
\qquad (i=1,\dots,n).
\]

Taking determinants, we obtain
\[
\det(u_1-u_0,\dots,u_n-u_0)
=
\frac{h^n}{\delta_1\cdots\delta_n}\det(\beta_1,\dots,\beta_n).
\]
Since $\beta_1,\dots,\beta_n$ is dual to $v_1,\dots,v_n$,
\[
\det(\beta_1,\dots,\beta_n)=\det(v_1,\dots,v_n)^{-1}.
\]
Taking absolute values gives
\[
\left|\det(u_1-u_0,\dots,u_n-u_0)\right|
=
\frac{h^n}{\delta_1\cdots\delta_n}\cdot
\frac{1}{|\det(v_1,\dots,v_n)|}.
\]
But the sublattice generated by $v_1,\dots,v_n$ is exactly $N_0$. Therefore, we have 
\[
	|\det(v_1,\dots,v_n)|=[N:N_0] = \delta_0.
\]
Hence
\[
\left|\det(u_1-u_0,\dots,u_n-u_0)\right|
=
\frac{h^n}{\delta_0\delta_1\cdots\delta_n}.
\]
Consequently, 
\[
\Vol(P^\vee)
=
\frac{1}{n!}\left|\det(u_1-u_0,\dots,u_n-u_0)\right|
=
\frac{h^n}{n!\,\delta_0\cdots\delta_n}.
\]

Finally, substituting
\[
h=\mult(P)(\lambda_0+\cdots+\lambda_n)
\qquad\text{and}\qquad
\delta_i=\mult(P)\lambda_i
\]
for all $i$, we obtain
\[
\Vol(P^\vee)
=
\frac{\bigl(\mult(P)(\lambda_0+\cdots+\lambda_n)\bigr)^n}
{n!\,\prod_{i=0}^n \bigl(\mult(P)\lambda_i\bigr)}.
=
\frac{(\lambda_0+\cdots+\lambda_n)^n}
{n!\,\mult(P)\,\lambda_0\cdots\lambda_n}
\]
This proves the proposition.
\end{proof}

\begin{proof}[Proof of Theorem \ref{thm_multiplicity}]
	By assumption, we have 
		\[
			\lambda_0+\cdots+\lambda_n = c_0a_0^2+\cdots+c_na_n^2 = m a_0\cdots a_n,
		\]
	and
		\[
			\lambda_0\cdots\lambda_n = (c_0\cdots c_n)(a_0\cdots a_n)^2.
		\]
	Substituting these identities into
	\[
		\Vol(P^\vee) = \frac{(\lambda_0+\cdots+\lambda_n)^n} {n!\,\mult(P)\,\lambda_0\cdots\lambda_n}
	\]
	gives
	\[
		\Vol(P^\vee) = \frac{(m a_0\cdots a_n)^n} {n!\,\mult(P)\,(c_0\cdots c_n)(a_0\cdots a_n)^2} = \frac{m^n (a_0\cdots a_n)^{n-2}} {n!\,\mult(P)\,c_0\cdots c_n}.
	\]
\end{proof}

%

%
%
%

\newcommand{\etalchar}[1]{$^{#1}$}
\providecommand{\bysame}{\leavevmode\hbox to3em{\hrulefill}\thinspace}
\providecommand{\MR}{\relax\ifhmode\unskip\space\fi MR }
\providecommand{\MRhref}[2]{%
	\href{http://www.ams.org/mathscinet-getitem?mr=#1}{#2}
}
\providecommand{\href}[2]{#2}

\end{document}